\newtheorem{thm}{Theorem}
\newtheorem{prop}{Proposition}
\newtheorem{lem}{Lemma}
\newcommand{\Setmake}[2]{\{#1 : #2 \}}
\newcommand{\tRe}{\textup{Re }}
\newcommand{\bfrac}[2]{\left(\frac{#1}{#2}\right)}
\newcommand{\dv}{\boldsymbol d}
\newcommand{\zv}{\boldsymbol z}
\newcommand{\W}{\mathcal W}
\newcommand{\B}{\mathcal B}
\newcommand{\cU}{\mathcal U}
\newcommand{\A}{\mathcal A}
\newcommand{\V}{\mathcal V}
\newcommand{\ep}{\varepsilon}
\newcommand{\beql}[1]{\begin{equation}\label{#1}}
\newcommand{\eeq}{\end{equation}}
\begin{document}
\title{Almost prime triples and Chen's theorem} 
\author{Roger Heath-Brown}
\address{Mathematical Institute \\
University of Oxford\\
Andrew Wiles Building \\
Radcliffe Observatory Quarter\\
Woodstock Road\\
Oxford\\UK\\
OX2 6GG }
\email{Roger.Heath-Brown@maths.ox.ac.uk}
\author{Xiannan Li}
\address{Mathematical Institute \\
University of Oxford\\
Andrew Wiles Building \\
Radcliffe Observatory Quarter\\
Woodstock Road\\
Oxford\\UK\\
OX2 6GG }
\email{lix1@maths.ox.ac.uk}

\subjclass[2010]{Primary: 11N25, Secondary: 11N36} 

\begin{abstract} 
We show that there are infinitely many primes $p$ such that not only
does $p+2$ have at most two prime factors, but $p+6$ also has a
bounded number of prime divisors.  This refines the well known result
of Chen \cite{Ch1}.
\end{abstract}

\maketitle

\section{Introduction} 

The twin prime conjecture states that there are infinitely many primes
$p$ such that $p+2$ is also prime.  Although the conjecture has
resisted our efforts, there has been spectacular partial progress.
One well known result is Chen's theorem \cite{Ch1} that there are
infinitely many primes such that $p+2$ has at most two prime factors.
In a different direction, building on the work of Goldston, Pintz, and
Y\i ld\i r\i m \cite{GPY}, it has recently been shown by Zhang \cite{Zh}
that there are
bounded gaps between consecutive primes infinitely often.  The
numerical result has been improved in the works of the Polymath8
project \cite{Poly} and Maynard \cite{JM}, and the bounded gaps result
has also been extended to prime tuples by Maynard \cite{JM} and Tao
(unpublished). 

The twin prime conjecture is a special case of the Hardy-Littlewood
conjecture, which postulates asymptotics for prime tuples in general.
An example is that one expects that the number of primes $p\leq x$ such
that $p+2$ and $p+6$ are simultaneously prime should be asymptotic to  
$$C \frac{x}{\log^3 x}$$ 
for a certain positive constant $C$ (given by
\eqref{eqn:C}).  In this direction, it has been proven that there are
infinitely many natural numbers $n$ such that $n(n+2)(n+6)$ is almost
prime --- that is, $n(n+2)(n+6)$ has at most $r$ prime factors, for some
finite $r$.  More specifically, Porter \cite{Port} proved this
statement for $r = 8$ and this was improved by Maynard \cite{JM2} to
$r=7$.   

We are interested in proving an analogue of Chen's theorem for prime
tuples.  More precisely, we show that there are infinitely many
primes $p$ such that $p+2$ has at most two prime factors, and $p+6$
has at most $r$ prime factors for some finite $r$.

\begin{thm}
Let $\pi_{1, 2, r}(x)$ denote the number of primes $p\leq x$ such that
$p+2$ has at most two prime factors and $p+6$ has at most $r$ prime
factors.  Then 
\begin{equation}
\pi_{1, 2, r} (x) \gg \frac{x}{\log^3x}
\end{equation}for $r=76$.
\end{thm}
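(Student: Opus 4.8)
The plan is to run Chen's weighted sieve for the condition ``$p+2$ has at most two prime factors'', while carrying the auxiliary condition $(p+6,P(z_3))=1$ throughout, where $P(z)=\prod_{q<z}q$ and $z_3=x^{1/(r+1)}$; the point of this choice is that a prime $p\le x$ with $(p+6,P(z_3))=1$ then automatically satisfies $\Omega(p+6)\le r$, so it is enough to produce $\gg x/\log^3x$ primes $p\le x$ with $p+2=P_2$ and $(p+6,P(z_3))=1$. Put $z_2=x^{1/u}$ and let $W$ be Chen's weight for $\{p+2:p\le x\}$ at sifting level $z_2$ --- the standard combination of $1$, of $-\tfrac12$ for each prime $q\mid p+2$ with $z_2\le q<x^{1/3}$, and of $-\tfrac12$ for each factorisation $p+2=q_1q_2q_3$ with $z_2\le q_1<x^{1/3}\le q_2\le q_3$ --- so arranged that $W(n)>0$ together with $(n,P(z_2))=1$ and $n\le x$ forces $\Omega(n)\le 2$. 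For suitable parameters one then has
\[
\pi_{1,2,r}(x)\ \ge\ \sum_{\substack{p\le x\\ (p+2,P(z_2))=1\\ (p+6,P(z_3))=1}}W(p+2)\ =\ S_0-\tfrac12 S_1-\tfrac12 S_2,
\]
where $S_0$ counts the $p\le x$ with both coprimality conditions, $S_1$ sums over the primes $q\in[z_2,x^{1/3})$ dividing such a $p+2$, and $S_2$ sums over the factorisations $p+2=q_1q_2q_3$ as above, and the task is to bound the right-hand side below.

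For $S_0$ I would treat the $p+6$ condition by a lower-bound ($\beta$-)sieve, $\mathbf{1}[(p+6,P(z_3))=1]\ge\sum_{e<D_3,\ e\mid P(z_3)}\lambda_e^{-}\mathbf{1}[e\mid p+6]$, and the $p+2$ condition by Chen's linear sieve; the moduli $d\mid P(z_2)$ and $e\mid P(z_3)$ that occur are odd with $(d,e)=1$ (no prime divides both $p+2$ and $p+6$), so the Chinese Remainder Theorem reduces everything to primes $p\le x$ in progressions to moduli $\le D_2D_3$, which the Bombieri--Vinogradov theorem controls once $D_2D_3\le x^{1/2-\ep}$. This gives
\[
S_0\ \gg\ f(s_2)\,f(s_3)\,\prod_{q<z_2}\Big(1-\frac1{q-1}\Big)\prod_{q<z_3}\Big(1-\frac1{q-1}\Big)\pi(x)\ \gg\ \frac{x}{\log^3x},
\]
the sieve factors $f(s_2),f(s_3)$ being positive once the sifting ratios $s_2=\log D_2/\log z_2$ and $s_3=\log D_3/\log z_3$ exceed $2$. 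The exceptional sums $S_1,S_2$ are handled the same way, but with an \emph{upper}-bound sieve for the $p+6$ condition (and, for $S_1$, an upper-bound sieve in the residual $p+2$ variable together with a summation over $q$), giving upper bounds of the shape $C_1\,x/\log^3x$ and $C_2\,x/\log^3x$ with explicit $C_1,C_2$ depending on $u$.

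As in Chen's original argument \cite{Ch1}, the naive bound for $S_2$ (and part of $S_1$) is too large for $S_0-\tfrac12 S_1-\tfrac12 S_2$ to come out positive, and the remedy is Chen's switching principle. The contribution to $S_2$ of a triple $(q_1,q_2,q_3)$ with $p=q_1q_2q_3-2$ prime is recast as a count of integers $m=q_1q_2q_3\le x$, with the $q_i$ in prescribed ranges, for which $m-2$ is prime and --- the new feature here --- $(m+4,P(z_3))=1$, since $m+4=p+6$. One is thus led to sieve the sequence $\mathcal{M}=\{q_1q_2q_3+4:\ z_2\le q_1<x^{1/3}\le q_2\le q_3,\ q_1q_2q_3\le x\}$ for the condition $(m+4,P(z_3))=1$ while simultaneously extracting ``$m-2$ prime'' by a linear sieve in $m-2$; the arithmetic input is a Bombieri--Vinogradov-type estimate for the number of $m=q_1q_2q_3\le x$ in a fixed residue class modulo $e\mid P(z_3)$ with $m-2$ prime, which follows --- on averaging over the modulus --- from the bilinear structure of $\mathcal{M}$ (writing $q_1q_2q_3=q_1\cdot(q_2q_3)$ and estimating the resulting bilinear forms via Vaughan's identity and the large sieve / dispersion method), and this yields an adequate level of distribution, which can be taken somewhat beyond $x^{1/2}$. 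I expect this step to be the main obstacle: making Chen's role-reversal coexist with the auxiliary sieve on $m+4=p+6$ --- keeping the modulus $e$ from that sieve together with the moduli entering the bilinear estimates within the admissible range, and checking that the coincidences between $e$ and $q_1q_2q_3$ (common prime factors, or the $q_i$ lying in residue classes forbidden modulo $e$) contribute only lower-order terms --- is the technical heart of the argument.

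Finally one optimises the free parameters --- $u$ (hence $z_2$), the exponent $1/(r+1)$ defining $z_3$, the sieve levels $D_2,D_3$, the dyadic splitting of the ranges for $q_1,q_2,q_3$, and the level reached in the switching step --- subject to $D_2D_3\le x^{1/2-\ep}$, to $s_2,s_3>2$, and to the requirement that the improved bound for $S_2$ beat $S_0-\tfrac12 S_1$. The interplay of these constraints forces $z_3$ to be no larger than roughly $x^{1/77}$, and carrying the optimisation through delivers the admissible value $r=76$, so that $S_0-\tfrac12 S_1-\tfrac12 S_2\gg x/\log^3x$, which gives the theorem.
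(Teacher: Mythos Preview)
Your outline has the right architecture --- Chen's weight on $p+2$, Chen's switching for the triple-prime term, and an auxiliary sieve on $p+6$ --- but there are two substantive gaps.

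\textbf{The lower bound for $S_0$ does not work as written.} You propose to bound $\mathbf{1}[(p+6,P(z_3))=1]$ below by $\sum_e\lambda_e^-\mathbf{1}[e\mid p+6]$ and then apply a linear lower-bound sieve to the remaining $(p+2,P(z_2))=1$ condition, arriving at a main term proportional to $f(s_2)f(s_3)$. But the coefficients $\lambda_e^-$ have both signs, so once you expand over $e$ you cannot insert a further lower-bound sieve on $p+2$ termwise: for negative $\lambda_e^-$ the inequality reverses. Equivalently, if $\delta_i^-\le\delta$ are two lower-bound sieve functions, there is no inequality $\delta(m)\delta(n)\ge\delta_1^-(m)\delta_2^-(n)$ in general. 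The paper deals with exactly this obstruction by invoking the Br\"udern--Fouvry vector sieve, which replaces the false product bound by the valid inequality
\[
\delta(m)\delta(n)\ \ge\ \delta_1^-(m)\delta_2^+(n)+\delta_1^+(m)\delta_2^-(n)-\delta_1^+(m)\delta_2^+(n),
\]
leading to a main term governed by $f(s_1)F(s_2)+F(s_1)f(s_2)-F(s_1)F(s_2)$, not $f(s_1)f(s_2)$. This is not a cosmetic difference: the vector-sieve lower bound is strictly weaker, and the subsequent numerical optimisation is tighter than your sketch suggests.

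\textbf{Direct sifting at $z_3=x^{1/(r+1)}$ does not reach $r=76$.} The paper explicitly calls the strategy you describe the ``naive approach'' and introduces an additional device --- a Richert-type weighted sieve on $p+6$ with weights $w_q=1-(\log q)/(\log y)$ --- precisely to lower $r$. In the paper the coprimality parameter on $p+6$ is only $\xi_2=x^{1/410}$; the final bound $r\le\theta^{-1}+\lambda^{-1}<77$ comes from the weight parameter $\theta=1/30$ and an optimised $\lambda\approx0.021$, not from $\theta_2$. Your assertion that ``carrying the optimisation through delivers $r=76$'' is therefore unsupported: without the weighted sieve the method yields an $r$ substantially larger than $76$, and you have not indicated any mechanism that would compensate.

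A smaller point: the switching step does not require any level of distribution beyond $x^{1/2}$. The paper handles $N_0$ (your $S_2$) with the Selberg sieve and a standard Bombieri--Vinogradov theorem for sums over products $p_1p_2p_3$ (their Lemma~\ref{lem:BV2}); no bilinear dispersion argument pushing past $x^{1/2}$ is needed.
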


Our basic philosophy, which the proof will illustrate, is the
following.  Suppose one has polynomials $f_1(x),\ldots,f_{k+1}(x)$ and
positive integers $r_1,\ldots,r_k$.
Then, if the weighted sieve can prove that
\[f_1(n)=P_{r_1},\ldots,f_k(n)=P_{r_k}\]
for infinitely many integers
$n$, then one should be able to modify the argument to show the
existence of a positive integer $r_{k+1}$ such that
\[f_1(n)=P_{r_1},\ldots,f_{k+1}(n)=P_{r_{k+1}}\]
for infinitely many integers $n$.

Our approach uses the weighted sieve which appeared in Chen's original
work, as well as the vector sieve of Br\"{u}dern and Fouvry \cite{BF}.  We
will also use a Selberg upper bound sieve  of ``mixed dimension''. The
value of $r$ in our theorem could be improved by using a more 
elaborate weighted sieve, but we will not pursue this.

{\bf Acknowledgement.} We would like the thank the referee for a very careful reading of the manuscript, with special attention to the numerical work described at the end of Section 4.  In particular, we are indebted to the referee for the neat calculation of the values of $B(1, (4\theta_2)^{-1})$ and $J(\theta_1, \theta_2, \theta)$. This work was supported by EPSRC grant 
EP/K021132X/1. 

\section{The basic setup}

In the sequel, $p$ and $p_i$ shall always denote primes.  Let $\ep>0$
be a small positive constant, and let $x^\ep<\xi_2<\xi_1\le x^{1/3}$ be
parameters to be decided in due course. We will work with the set 
\[\A=\{p+2:\, x^{1/3}<p\le x-6,\, (p+6,P(\xi_2))=1\},\]
where 
\[P(w)=\prod_{p<w}p\]
as usual.

The basic idea in Chen's argument is to consider the expression
\beql{eqn:basicdecomp}
S_1=S(\A;\xi_1)-\frac12 \sum_{\xi_1\le p\leq x^{1/3}}S(\A_p;\xi_1)-\frac12
N_0,
\eeq
where
\[N_0=\#\{p_1p_2p_3\in\A:\, \xi_1\le p_1\le x^{1/3}<p_2<p_3\}.\]
One then has an inequality of the form
\beql{chin}
S_1\le \#\A^{(0)}+\#\A^{(1)}+\frac12\#\A^{(2)},
\eeq
where
\[\A^{(0)}=\{n\in\A:\, p_1^2\mid n \mbox{ for some }p_1\ge \xi_1\},\]
\[\A^{(1)}=\{n\in\A:\, n \mbox{ prime, or } 
n=p_1p_2\mbox{ with }p_2>p_1>x^{1/3}\},\]
and
\[\A^{(2)}=\{n\in\A:\, n=p_1p_2\mbox{ with }p_2>x^{1/3}\ge p_1\ge \xi_1\}.\]
The bound (\ref{chin}), which the reader may easily verify, is closely related
to the inequality used by Halberstam and Richert \cite[Chapter 11,
(2.1)]{HR}, for example.

One immediately has $\#\A^{(0)}\ll x/\xi_1$, which will be sufficiently
small for our purposes.  Moreover one can see that if
$n\in\A^{(1)}\cup\A^{(2)}$ then $n=p+2$ with $n=P_2$ and $n+4=P_r$,
where $r=[(\log x)/(\log \xi_2)]$.  We therefore obtain a result of the
type given in our theorem provided that we can give a suitable
positive lower bound for $S_1$. This can
be achieved by using the vector sieve of Br\"{u}dern and Fouvry \cite{BF}
in place of the usual upper and lower bound sieves.

There are a number of methods to try to improve the value of $r$
obtained by this naive approach.  We
choose to include a simple weighted sieve in order to 
eliminate those triples $(p, p+2, p+6)$ for which $p+6$ has many prime
factors. (The reader will observe that one could do better by 
incorporating more elaborate weights into (\ref{eqn:basicdecomp}).)

We proceed to define the sets
\[\B^{(i)}=\{p+6:\, p+2\in\A^{(i)}\},\;\;\;(i=1,2)\]
and a weight function
\beql{wtd}
w_p=1-\frac{\log p}{\log y}
\eeq
where $y=x^{1/v}$ for some positive constant $v$ to be decided in due
course. At this stage we insist only that $\xi_2<y<x$. Since any element
of $\B^{(i)}$ is coprime to $P(\xi_2)$, and since $w_p<0$ for $p>y$, we now have
\begin{eqnarray*}
\sum_{\xi_2\le p\le y}w_p\#\B_p^{(i)}&\ge &\sum_{2\le p\le x}w_p\#\B_p^{(i)}\\
&=&\sum_{b\in\B^{(i)}}
\left(\omega(b)-\frac{1}{\log y}\sum_{p\mid b}\log p\right)\\
&\ge&\sum_{b\in\B^{(i)}}\left(\omega(b)-\frac{1}{\log y}\log
  x\right)\\
&=&\sum_{b\in\B^{(i)}}(\omega(b)-v).
\end{eqnarray*}
Here, as usual, $\omega(b)$ denotes the number of distinct prime factors of $b$.  It then follows from (\ref{chin}) that if $\lambda$ is any positive
constant then
\begin{eqnarray}\label{eqn:weightedsecondcompbdd}
\lefteqn{S_1-
\lambda\sum_{\xi_2\le p\le y}w_p\left(\#\B_p^{(1)}+\frac12\#\B_p^{(2)}\right)}
\hspace{1cm} \notag \\
&\le&
O(x/\xi_1)+\left(\sum_{b\in\B^{(1)}}+\frac12\sum_{b\in\B^{(2)}}\right)
(1+\lambda v-\lambda\omega(b))\notag \\
&\le&
O(x/\xi_1)+(1+\lambda v)\#\{b\in\B^{(1)}\cup\B^{(2)}:\,\omega(b)<\lambda^{-1}+v\} \notag  \\
&\le&O(x/\xi_2)+(1+\lambda v)
\#\{b\in\B^{(1)}\cup\B^{(2)}:\,\omega(b)<\lambda^{-1}+v,\,b\mbox{ square-free}\}.
\end{eqnarray}
Here we use the observation the the number of elements of $\B^{(i)}$
which are not square-free must be $O(x/\xi_2)$, since any such element
is coprime to $P(\xi_2)$ by definition.

We therefore seek to show that
\beql{goal}
S_1-\lambda\sum_{\xi_2\le p\le y}w_p
\left(\#\B_p^{(1)}+\frac12\#\B_p^{(2)}\right)
\ge \{c+o(1)\}\frac{x}{(\log x)^3}
\eeq
for some positive constant $c$. Substituting our expression for $S_1$ from \eqref{eqn:basicdecomp}, we see that we must bound $S(\A;\xi_1)$ from below, which we accomplish using a combination of the linear
sieve with the vector sieve.  We require upper bounds for the rest of
the terms.  Here, we use two distinct methods. For
\[\sum_{\xi_1\le p\leq x^{1/3}}S(\A_p;\xi_1)\]
we will use the vector sieve for some ranges of $p$ and the Selberg sieve for other ranges of $p$.  For the remaining terms
it turns out to be more efficient to apply the Selberg sieve.  Our application has the novel feature that the sieving
dimension changes from 2 (for primes $p<\xi_2$) to 1 (for larger primes)
part way through the range.  Naturally, for the term $N_0$ we first 
apply Chen's famous ``reversal of r\^oles'' trick before applying the
upper bound sieve.

\section{Sieving tools}\label{sieve}

\subsection{The linear sieve}\label{LS}

In the Rosser--Iwaniec linear sieve one has a real parameter
$D\ge 2$ and constructs coefficients $\lambda^{\pm}(d)$ supported on
the positive integers $d\le D$ such that
\[\lambda^{\pm}(d)=\mu(d)\mbox{ or $0$, for all }d\le D\]
and
\[\sum_{d\mid n}\lambda^-(d)\le\sum_{d\mid n}\mu(d)\le
\sum_{d\mid n}\lambda^+(d)\]
for all positive integers $n|P(z)$ for some parameter $z$.  Suppose we have a multiplicative 
function $h(d)\in[0,1)$ such that
\beql{d1}
\prod_{w\le p<z}\big(1-h(p)\big)^{-1}\le 
\frac{\log z}{\log w}\left(1+\frac{L}{\log w}\right)
\eeq
for $z\ge w\ge 2$, for some parameter $L$.  Then, by Theorem 11.12 of
Friedlander and Iwaniec \cite{FI}, we have
\beql{lsub1}
\sum_{d\mid P(z)}\lambda^+(d)h(d)\le 
\left\{F(s)+O_L\left((\log
    D)^{-1/6}\right)\right\}V(z,h)\;\;\;\;\;(s\ge 1)
\eeq
and
\[\sum_{d\mid P(z)}\lambda^-(d)h(d)\ge 
\left\{f(s)+O_L\left((\log
    D)^{-1/6}\right)\right\}V(z,h)\;\;\;\;\;(s\ge 2)\]
where $F(s)$ and $f(s)$ are the standard upper and lower bound
functions for the linear sieve, with $s=(\log D)/(\log z)$, and
\[V(z,h)=\prod_{p<z}\left(1-h(p)\right).\]

Moreover one sees from \cite[(6.31)--(6.34)]{FI} that
\beql{lsub2}
\sum_{d\mid P(z)}\lambda^-(d)h(d)\le V(z,h)\le
\sum_{d\mid P(z)}\lambda^+(d)h(d).
\eeq

\subsection{The Fundamental Lemma sieve}

Let $\cU$ be a set of positive integers, possibly with multiplicities,
and suppose that
\beql{Ua}
\#\cU_d=h^*(d)Y+r(d)
\eeq
for some multiplicative function $h^*(d)\in[0,1)$.  
We assume for simplicity that
\beql{r*b}
h^*(p)\le C_0p^{-1},
\eeq
for some constant $C_0\ge 2$.  Then 
\[\prod_{w\le p<z}\big(1-h^*(p)\big)^{-1}\le 
K\left(\frac{\log z}{\log w}\right)^{\kappa}\]
for $z\ge w\ge 2$ for appropriate constants $K$ and $\kappa$ depending
only on $C_0$. Hence Corollary 6.10 of Friedlander and Iwaniec
\cite{FI} applies, and yields
\[S(\cU;z)=\{1+O_{C_0}(e^{-s})\}YV(z,h^*)+O(\sum_{d<z^s}|r(d)|),\]
for $z\ge 2$ and $s\ge 1$.

An inspection of the proof makes it clear that one only uses
(\ref{Ua}) for values $d\mid P(z)$.

\subsection{The vector sieve}
Let $\W$ be a finite subset of $\mathbb{N}^2$.
Suppose that $z_1, z_2 \geq 2$ with
\[\log z_1\asymp\log z_2\]
and write $\zv = (z_1,
z_2)$.  For $\dv = (d_1, d_2)$ and $\mathbf{n} = (n_1, n_2)$, we write 
$\dv|\mathbf{n}$ to mean that $d_i|n_i$ for $1\le i \le 2$.  Define as usual 
$$\W_{\dv} = \{\mathbf{n}\in \W: \dv|\mathbf{n}\},$$
and
$$S(\W; \zv) = \Setmake{(m, n)\in \W}{(P(z_1), m) = (P(z_2), n) = 1}.$$
Suppose that
$$\#\W_{\dv} = h(\dv)X + r(\dv)
$$
for some multiplicative function $h(\dv)\in(0,1]$ such that
$h(p,1)+h(1,p) - 1 <h(p,p)\le h(p,1)+h(1,p)$ for all primes $p$ and
\beql{rb}
h(p,1),h(1,p)\le C_1p^{-1},\quad\mbox{and}\quad 
h(p,p)\le C_1p^{-2}
\eeq
for some constant $C_1\ge 2$.  Then 
using the vector sieve and the linear sieve, we will derive both upper
and lower bounds for $S(\W; \zv)$.  

For $i=1, 2$, let $\lambda_i^+$
and $\lambda_i^-$ denote the coefficients of the upper and lower bound
linear sieves of level 
\[D_i=z_i^{s_i},\;\;\;(i=1,2)\] 
where $1\le s_i\ll 1$.   
Further, let $\delta = \mu * 1$, $\delta_i^+ = \lambda_i^+*1$ and
$\delta_i^- = \lambda_i^- * 1$.   Note that $\delta_i^- \leq \delta
\leq \delta_i^+$, and that
\beql{vu}
\delta(m)\delta(n) \leq  \delta_1^+(m) \delta_2^+(n) 
\eeq
and
\beql{vl}
\delta(m) \delta(n) \geq \delta_1^-(m)\delta_2^+(n) + \delta_1^+(m)
\delta_2^-(n) - \delta_1^+(m)\delta_2^+(n),
\eeq
for any natural numbers $m$ and $n$.

In applying the vector sieve we will want to replace $h(\dv)$ by
$h_1(d_1)h_2(d_2)$, where
\[h_1(d)=h(d,1),\quad\mbox{and}\quad h_2(d)=h(1,d).\]
There is no difficulty when $d_1$ and $d_2$ are coprime, but there are
potential problems when they share a common factor.
We circumvent this issue by using a
preliminary application of the Fundamental Lemma sieve.  Suppose we
are given $z_0\ge 2$ and  positive 
integers $d_1,d_2$ coprime to $P(z_0)$.  Let $\cU=\cU(\dv)$ be
the set of products $mn$ as $(m,n)$ runs over $\W_{\dv}$, the values $mn$
being counted according to multiplicity.  Then if $d\mid P(z_0)$ we see using the multiplicativity of $h$ that (\ref{Ua}) holds with $Y=h(\dv)X$, 
\[h^*(d)=\sum_{d=e_1 e_2 e_3}h(e_1e_3,e_2e_3)\mu(e_3)\]
and
\[r(d)=\sum_{d=e_1 e_2 e_3}r(d_1e_1e_3,d_2e_2e_3)\mu(e_3).\]
In particular 
$h^*(p)=h(p,1)+h(1,p)-h(p,p) \in [0, 1)$, and
(\ref{r*b}) holds with suitable $C_0 = 2C_1$.
The Fundamental Lemma sieve therefore shows that
\begin{eqnarray}\label{FLa}
S\big(\W_{\dv};(z_0,z_0)\big)&=&S(\cU(\dv),z_0)\nonumber\\
&=&h(\dv)XV(z_0,h^*)+O(h(\dv)Xe^{-s})
+O\left(\sum_{e_1e_2e_3<z_0^s}|r(d_1e_1e_3,d_2e_2e_3)|\right).
\end{eqnarray}

We can now apply the upper bound vector sieve.  Suppose that
$z_1,z_2\ge z_0$, and define
\[P(z_0,z)=\prod_{z_0\le p<z}p.\]
Let 
\[\W^*=\{(m,n)\in\W:\,\big(mn,P(z_0)\big)=1\}.\]
Then according to (\ref{vu}) we have
\begin{eqnarray*}
S(\W; \zv)
&=& \sum_{(m,n)\in \W^*}\delta\big((m,P(z_0,z_1)\big)
\delta\big((n,P(z_0,z_2)\big) \\ 
&\le&\sum_{(m,n)\in\W^*}\left(\sum_{d_1|(m,P(z_0,z_1))}\lambda_1^+(d_1)\right)
\left(\sum_{d_2|(n,P(z_0,z_2))}\lambda_2^+(d_2)\right)\\ 
&= &\sum_{d_1|P(z_0,z_1)}\;\; \sum_{d_2|P(z_0,z_2)}
\lambda_1^+(d_1)\lambda_2^+(d_2)\#\W^*_{\dv}. 
\end{eqnarray*}
However $\#\W^*_{\dv}=S\big(\W_{\dv};(z_0,z_0)\big)$, whence (\ref{FLa})
shows that
\[S(\W; \zv)\le XV(z_0,h^*)\Sigma+O(E_1)+O(E_2),\]
where
\[\Sigma=\sum_{d_1|P(z_0,z_1)}\;\;\sum_{d_2|P(z_0,z_2)}\lambda_1^+(d_1)
\lambda_2^+(d_2)h(\dv)\]
and the error terms are
\[E_1=Xe^{-s}\sum_{d_1<D_1}\;\;\sum_{d_2<D_2}h(\dv)\]
and
\[E_2=\sum_{f_1\leq D_1 z_0^s}\;\;\sum_{f_2\leq D_2 z_0^s}\tau^2(f_1)
\tau^2(f_2)|r(f_1,f_2)|.\]
(We write $\tau(\ldots)$ for the divisor function as usual.)

To estimate $\Sigma$ we wish to replace $h(\dv)$ by
$h_1(d_1)h_2(d_2)$. These are equal when $d_1$ and $d_2$ are
coprime. Otherwise we note that 
\beql{rhob}
h(d_1,d_2)\le C_0^{\omega(d_1d_2)}(d_1d_1)^{-1}\ll
\tau(d_1)^{C_0}\tau(d_2)^{C_0}(d_1d_2)^{-1}\
\eeq
by (\ref{rb}), and similarly $h_1(d_1)\ll \tau(d_1)^{C_0}d_1^{-1}$
and $h_2(d_2)\ll \tau(d_2)^{C_0}d_2^{-1}$. Hence if $d_1$ and $d_2$ are
not coprime then 
\[h(d_1,d_2)=h(d_1,1)h(1,d_2)+
O(\tau(d_1)^{C_0}\tau(d_2)^{C_0}(d_1d_2)^{-1}).\]
This latter case
will only hold if there is a prime $p\ge z_0$ which divides both $d_1$
and $d_2$.  As a result we may deduce that
\begin{eqnarray*}
\Sigma&=&
\sum_{d_1|P(z_0,z_1)} \sum_{d_2|P(z_0,z_2)}\lambda^+_1(d_1)
\lambda^+_2(d_2)h_1(d_1)h_2(d_2)\\
&&\hspace{1cm}\mbox{}+O\left(\sum_{p\ge z_0}\;\sum_{e_1<D_1/p}\;
\sum_{e_2<D_2/p}\tau(pe_1)^{C_0}\tau(pe_2)^{C_0}(p^2e_1e_2)^{-1}\right).
\end{eqnarray*}
The leading term factors as
\[\left\{\sum_{d_1|P(z_0,z_1)}\lambda_1^+(d_1)h_1(d_1)\right\}
\left\{\sum_{d_2|P(z_0,z_2)}\lambda_2^+(d_2)h_2(d_2)\right\}\]
and so if $h_1$ and $h_2$ satisfy the condition (\ref{d1}) the
inequalities (\ref{lsub1}) and (\ref{lsub2}) will lead to an upper bound
\[\left\{F(s_1)+O_L\left((\log z_1)^{-1/6}\right)\right\}
\left\{F(s_2)+O_L\left((\log z_2)^{-1/6}\right)\right\}V_1V_2,\]
with
\[V_i=\prod_{z_0\le p<z_i}(1-h_i(p)),\quad(i=1,2).\]
The error term is 
\[\ll\sum_{p\ge z_0}p^{-2}(\log z_1)^{2^{C_0}}(\log z_2)^{2^{C_0}}
\ll z_0^{-1}(\log z_1z_2)^{2^{1+C_0}}.\]
Hence if we take
\[z_0=\exp(\sqrt[3]{\log z_1z_2})\]
then we find that
\[\Sigma\le F(s_1)F(s_2)V_1V_2\{1+O\big((\log z_1z_2)^{-1/6}\big)\}\]
on observing that $V_i\gg (\log z_1z_2)^{-1}$, by (\ref{d1}). 

The error term $E_1$ is easily handled using (\ref{rhob}).  This produces
\[E_1\ll Xe^{-s}(\log z_1z_2)^{2^{1+C_0}}\ll X\exp\{-(\log z_1z_2)^{1/4}\}\]
on choosing
\[s=\sqrt[3]{\log z_1z_2}.\]
The bound (\ref{r*b}) shows that $V(z_0,h^*)\gg (\log z_0)^{-C_0}$,
whence we may conclude that
\[E_1\ll XV(z_0,h^*)V_1V_2F(s_1)F(s_2)(\log z_1z_2)^{-1/6}.\]
Moreover if we write $D=D_1D_2$ we have 
\[E_2\ll_{\ep}\sum_{d_1<D_1(z_1z_2)^{\ep}}\;\;\sum_{d_2<D_2(z_1z_2)^{\ep}}
\tau(d_1d_2)^4|r(d_1,d_2)|\ll_{\ep}\sum_{d_1d_2<D^{1+\ep}}
\tau(d_1d_2)^4|r(d_1,d_2)|\]
for any fixed $\ep>0$.

We can therefore summarize our result as the first statement in the
following proposition.
\begin{prop}\label{P1}
Suppose that $h(\dv)$ satisfies (\ref{rb}) and that $h_1(d)$ and
$h_2(d)$ both satisfy (\ref{d1}).  Assume further that 
$D=z_1^{s_1}z_2^{s_2}$ with $1\le s_1,s_2\ll 1$, and $\log z_1 \asymp \log z_2$ as in our setup of the Vector Sieve.  Then
\begin{eqnarray*}
S(\W; \zv)&\le& XV(z_0,h^*)V_1V_2F(s_1)F(s_2)
\{1+O\big((\log D)^{-1/6}\big)\}\\
&&\hspace{1cm}\mbox{}+O_{\ep}\left(\sum_{d_1d_2<D^{1+\ep}}
\tau(d_1d_2)^4|r(d_1,d_2)|\right)
\end{eqnarray*}
for any fixed $\ep>0$. Indeed if we write
\[\sigma_i=\frac{\log D}{\log z_i},\quad\quad (i=1,2)\]
we may replace
$F(s_1)F(s_2)$ by
\[F(\sigma_1,\sigma_2):=\inf\{F(s_1)F(s_2):\,
s_1/\sigma_1+s_2/\sigma_2=1,\,s_i\ge 1\, (i=1,2)\}.\]

Similarly we have
\begin{eqnarray*}
S(\W; \zv)&\ge& XV(z_0,h^*)V_1V_2f(\sigma_1,\sigma_2)
\{1+O\big((\log D)^{-1/6}\big)\}\\
&&\hspace{1cm}\mbox{}+
O_{\ep}\left(\sum_{d_1d_2<D^{1+\ep}}\tau(d_1d_2)^4|r(d_1,d_2)|\right)
\end{eqnarray*}
for any fixed $\ep>0$, where
\begin{eqnarray*}
f(\sigma_1,\sigma_2):&=&\sup\{f(s_1)F(s_2)+f(s_2)F(s_1)-F(s_1)F(s_2):\,
s_1/\sigma_1+s_2/\sigma_2=1,\\
&&\hspace{2cm} s_i\ge 2\, (i=1,2)\}. 
\end{eqnarray*}
\end{prop}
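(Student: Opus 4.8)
The plan is to prove the lower bound statement; the upper bound is already fully established in the text leading up to the proposition, and only the optimization over $s_1,s_2$ (replacing $F(s_1)F(s_2)$ by the infimum $F(\sigma_1,\sigma_2)$) remains to be explained, which I address at the end. First I would run the lower bound vector sieve exactly in parallel to the upper bound argument, but starting from \eqref{vl} in place of \eqref{vu}. Writing $\W^*=\{(m,n)\in\W:(mn,P(z_0))=1\}$ as before, inequality \eqref{vl} gives
\[
S(\W;\zv)\ \ge\ \sum_{(m,n)\in\W^*}\Big(\delta_1^-\delta_2^+ + \delta_1^+\delta_2^- - \delta_1^+\delta_2^+\Big)\big((m,P(z_0,z_1)),(n,P(z_0,z_2))\big),
\]
and expanding each $\delta_i^{\pm}=\lambda_i^{\pm}*1$ and interchanging the order of summation as in the upper bound case reduces everything to the three sums $\sum_{\dv}\lambda_1^{\pm}(d_1)\lambda_2^{\pm}(d_2)\#\W^*_{\dv}$ with the sign patterns $(-,+)$, $(+,-)$, $(+,+)$. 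Each of these is handled identically to the upper bound: $\#\W^*_{\dv}=S(\W_{\dv};(z_0,z_0))$, and \eqref{FLa} converts it into $h(\dv)XV(z_0,h^*)$ times a product of two linear sieve sums, plus the error terms $E_1$ and $E_2$, which are bounded exactly as before (the bounds there never used the sign of the $\lambda_i$).

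Next I would replace $h(\dv)$ by $h_1(d_1)h_2(d_2)$ in each of the three leading sums, using \eqref{rhob} and the display following it; the error incurred is the same $O\big(z_0^{-1}(\log z_1z_2)^{2^{1+C_0}}\big)$ term, absorbed as before by the choice $z_0=\exp(\sqrt[3]{\log z_1z_2})$ and the lower bound $V_i\gg(\log z_1z_2)^{-1}$. Now the $(+,+)$ sum factors and is bounded above by $F(s_1)F(s_2)V_1V_2\{1+O((\log D)^{-1/6})\}$ via \eqref{lsub1}–\eqref{lsub2}, while for the $(-,+)$ and $(+,-)$ sums I use the lower bound half of \eqref{lsub1} on the minus factor and the upper bound half on the plus factor, together with \eqref{lsub2} to control signs, obtaining $f(s_1)F(s_2)V_1V_2\{1+O(\cdots)\}$ and $f(s_2)F(s_1)V_1V_2\{1+O(\cdots)\}$ respectively. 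Collecting the three contributions with signs gives the main term
\[
XV(z_0,h^*)V_1V_2\big(f(s_1)F(s_2)+f(s_2)F(s_1)-F(s_1)F(s_2)\big)\{1+O((\log D)^{-1/6})\},
\]
which for any admissible $(s_1,s_2)$ is a valid lower bound; taking the supremum over $s_1/\sigma_1+s_2/\sigma_2=1$, $s_i\ge 2$ yields $f(\sigma_1,\sigma_2)$. One subtlety: the linear sieve bounds \eqref{lsub1} require $s\ge 1$ for $F$ and $s\ge 2$ for $f$, which is exactly why the constraint in $f(\sigma_1,\sigma_2)$ is $s_i\ge 2$ while in $F(\sigma_1,\sigma_2)$ it is $s_i\ge 1$; one must also check that the constraint $s_1/\sigma_1+s_2/\sigma_2=1$ is the correct way to encode $d_1d_2\le D_1D_2=D$ given $D_i=z_i^{s_i}$, which follows since $\log(D_1D_2)=s_1\log z_1+s_2\log z_2$ and $\log z_i=\log D/\sigma_i$.

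For the optimization remark on the upper bound side: the argument above was carried out for one fixed choice of sieve levels $D_i=z_i^{s_i}$, but the error term $E_2$ only depends on the product $D=D_1D_2$, not on the individual $D_i$. Hence for any factorization $D=D_1D_2$ with $D_i=z_i^{s_i}$, i.e. for any $(s_1,s_2)$ with $s_1/\sigma_1+s_2/\sigma_2=1$ and $s_i\ge 1$, the upper bound $XV(z_0,h^*)V_1V_2F(s_1)F(s_2)\{1+O((\log D)^{-1/6})\}$ holds, and one simply takes the infimum. The main obstacle is bookkeeping rather than conceptual: one must make sure that the three sign-patterned sums in the lower bound each satisfy the hypotheses of \eqref{lsub1} (in particular that the multiplicative functions $h_1,h_2$ obey the dimension-one bound \eqref{d1}, which is assumed in the proposition), and that the interplay of \eqref{lsub2} with the negative coefficients $\lambda_i^-$ is invoked in the correct direction so that no sign error creeps into the subtracted $F(s_1)F(s_2)$ term.
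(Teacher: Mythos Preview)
Your parallel development of the lower bound via \eqref{vl} is correct up to the point where the leading term factors into $\Sigma_1^-\Sigma_2^+ + \Sigma_1^+\Sigma_2^- - \Sigma_1^+\Sigma_2^+$ (with $\Sigma_i^{\pm}=\sum_{d\mid P(z_0,z_i)}\lambda_i^{\pm}(d)h_i(d)$), and your treatment of the errors $E_1,E_2$ and of the replacement $h(\dv)\to h_1(d_1)h_2(d_2)$ matches the paper. But there is a genuine gap at the step where you claim that the $(-,+)$ sum is bounded below by $f(s_1)F(s_2)V_1V_2\{1+O(\cdots)\}$. You cannot bound the product $\Sigma_1^-\Sigma_2^+$ from below by multiplying a lower bound for $\Sigma_1^-$ by an \emph{upper} bound for $\Sigma_2^+$; with only the one-sided inequalities \eqref{lsub1} and \eqref{lsub2} available, the sharpest termwise lower bound for $\Sigma_1^-\Sigma_2^+$ is $f(s_1)V_1\cdot V_2$, not $f(s_1)V_1\cdot F(s_2)V_2$. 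Done term by term you therefore reach only $(f(s_1)+f(s_2)-F(s_1)F(s_2))V_1V_2$, which is strictly weaker than the claimed bound and need not even be positive. This is not bookkeeping; it is the heart of the lower bound.

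The paper repairs this by keeping the three terms together and using a short algebraic inequality. Set $L_i=\{f(s_i)+O((\log D)^{-1/6})\}V_i$ and $U_i=\{F(s_i)+O((\log D)^{-1/6})\}V_i$, so that \eqref{lsub1} and \eqref{lsub2} give $U_i\ge\Sigma_i^+\ge L_i\ge 0$ and $\Sigma_i^-\ge L_i$. Using $\Sigma_i^-\ge L_i$ and $\Sigma_j^+\ge 0$ first,
\[
\Sigma_1^-\Sigma_2^+ + \Sigma_2^-\Sigma_1^+ - \Sigma_1^+\Sigma_2^+
\;\ge\; L_1\Sigma_2^+ + L_2\Sigma_1^+ - \Sigma_1^+\Sigma_2^+
\;=\; L_1L_2-(\Sigma_1^+-L_1)(\Sigma_2^+-L_2),
\]
and then $0\le \Sigma_i^+-L_i\le U_i-L_i$ yields the further lower bound $L_1L_2-(U_1-L_1)(U_2-L_2)=L_1U_2+L_2U_1-U_1U_2$. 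Substituting for $L_i,U_i$ gives exactly $\big(f(s_1)F(s_2)+f(s_2)F(s_1)-F(s_1)F(s_2)\big)V_1V_2\{1+O((\log D)^{-1/6})\}$, after which the supremum over $(s_1,s_2)$ with $s_1/\sigma_1+s_2/\sigma_2=1$, $s_i\ge 2$ is legitimate. Your discussion of the optimization step (why the error depends only on the product $D$, and why $s_1/\sigma_1+s_2/\sigma_2=1$ encodes $D_1D_2=D$) is correct.
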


The lower bound is proved along the same lines as the upper bound,
using (\ref{vl}) in place of (\ref{vu}).  In handling the expression
corresponding to $\Sigma$ we encounter a leading term of the form
\[\Sigma_1^-\Sigma_2^+ + \Sigma_2^-\Sigma_1^+ -
\Sigma_1^+\Sigma_2^+,\]
where
\[\Sigma_i^{\pm}=\sum_{d|P(z_0,z_i)}\lambda_i^+(d)h_i(d).\]
In general, if 
\beql{re}
U_i\ge\Sigma_i^+\ge L_i\ge 0\quad\mbox{and}\quad\Sigma_i^-\ge L_i\quad
\mbox{for}\quad i=1,2
\eeq
then
\begin{eqnarray*}
\Sigma_1^-\Sigma_2^+ + \Sigma_2^-\Sigma_1^+ -
\Sigma_1^+\Sigma_2^+&\ge& L_1\Sigma_2^+ + L_2\Sigma_1^+ -
\Sigma_1^+\Sigma_2^+\\
&=& L_1L_2-(\Sigma_1^+ - L_1)(\Sigma_2^+ - L_2).
\end{eqnarray*}
Since $\Sigma_2^+ - L_2\ge 0$ and $U_1-L_1\ge 0$
the above expression is at least
\[L_1L_2-(U_1 - L_1)(\Sigma_2^+ - L_2)\ge 
L_1L_2-(U_1 - L_1)(U_2 - L_2)=L_1U_2+L_2U_1-U_1U_2.\]
To complete the proof of the proposition we apply the above
inequality with
\[U_i=\left\{F(s_i)+O_L\left((\log D_i)^{-1/6}\right)\right\}V_i
\quad\mbox{and}\quad
L_i=\left\{f(s_i)+O_L\left((\log D_i)^{-1/6}\right)\right\}V_i,\]
the required inequalities (\ref{re}) following from our description of
the linear sieve, given in subsection \ref{LS}, and noting that $\log D_i \asymp \log D$ for $i = 1, 2$.

\subsection{Selberg's sieve}\label{ss:ss}
Let $\W$ be a set of positive integers and for each prime $p<z$ let
$\Omega(p)$ be a set of residue classes modulo $p$.
We would like to estimate 
\[S(\W;z) = \#\{w\in \W:\,w\not\in \Omega(p) \mbox{ for all } p<z\}\]
using Selberg's sieve.

Suppose that
\beql{rem}
\#\{n\in\W: [n\mbox{ mod }p]\in\Omega(p)\mbox{ if }p\mid 
d\mbox{ and }p<z\}=h(d)X+r_d
\eeq
for some multiplicative function $h(d)\in[0,1)$.
Then the usual analysis of Selberg's sieve (see Halberstam
and Richert \cite[Theorem 3.2]{HR}, for example) shows that
\[S(\W;z)\le \frac{X}{G(z)}+\sum_{d<z^2}3^{\omega(d)}|r_d|,\]
in which
\[G(z)=\sum_{d<z}\mu^2(d)g(d)\]
where $g$ is the multiplicative function supported on squarefree numbers defined by 
\[g(p)=\frac{h(p)}{1-h(p)}.\]

For our applications we will have 
\begin{equation}
ph(p) = 
\begin{cases}
2+O(p^{-1}), & p<z_2,\\
1+O(p^{-1}), & z_2 \le p <z_1,\\
0, & \textup{otherwise,}
\end{cases}
\end{equation}
where $2\le z_2<z_1\le z$.

We now need to develop the asymptotics for $G(z)$.   
\begin{prop}\label{prop:selmult}
Preserve notation as above, and define 
\[s_i=\frac{\log z}{\log z_i}.\]
Let  
$$\rho: (0, \infty) \rightarrow \mathbb{R}$$ 
be Dickman's function, defined by 
\[\rho(s) = \left\{\begin{array}{ll}0, & s\le 0,\\
1, & 0<s\le 1, \end{array}\right.\]
and 
\begin{align}
s \rho'(s) = -\rho(s-1)
\end{align}for $s>1$.  Further let
$$B: (0,\infty)^2 \rightarrow \mathbb{R}$$ be defined by
 \begin{equation}
B(s_1, s_2)^{-1} =e^{-2\gamma}
\int\int_{\{(w_1.w_2): w_1/s_1+w_2/s_2\le 1\}}\rho(w_1)\rho(w_2)dw_1dw_2.
\end{equation}  
Then we have that
\begin{equation}
G(z)^{-1}\sim B(s_1,s_2)V(z,h)
\end{equation}
if $1\le s_1,s_2\ll 1$.
\end{prop}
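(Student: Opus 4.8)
The plan is to prove the equivalent statement $G(z)V(z,h)\to B(s_1,s_2)^{-1}$. Note that, since $g$ is supported on squarefrees and $h(p)=0$ for $p\ge z_1$, one has $V(z,h)^{-1}=\prod_{p<z_1}(1-h(p))^{-1}=\prod_{p<z_1}(1+g(p))=\sum_{d\mid P(z_1)}\mu^2(d)g(d)$, which is exactly the sum defining $G(z)$ with the constraint $d<z$ removed; so $G(z)V(z,h)$ is the proportion of this Euler product carried by the truncation, and Dickman's function will enter because the truncation is genuinely binding once $z_1^2>z$. Throughout, $z,z_1,z_2\to\infty$ with $\log z_i\asymp\log z$.

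\textbf{Step 1: reduction to a two-dimensional smooth-number sum.} From the hypotheses $ph(p)=2+O(p^{-1})$ for $p<z_2$ and $ph(p)=1+O(p^{-1})$ for $z_2\le p<z_1$ one gets $g(p)=2p^{-1}+O(p^{-2})$ and $g(p)=p^{-1}+O(p^{-2})$ respectively, whence $1+g(p)p^{-s}=(1+p^{-1-s})^{2}\{1+O(p^{-2})\}$ in the first range and $=(1+p^{-1-s})\{1+O(p^{-2})\}$ in the second. Using $\prod_{p<z_1}=\prod_{p<z_2}\cdot\prod_{z_2\le p<z_1}$ and passing from squarefree $d$ to arbitrary integers, this gives a factorisation
\[
\sum_{d}\mu^2(d)g(d)\,d^{-s}=A(s)\,Q(s),\qquad Q(s):=\Bigl(\prod_{p<z_1}(1-p^{-1-s})^{-1}\Bigr)\Bigl(\prod_{p<z_2}(1-p^{-1-s})^{-1}\Bigr),
\]
where $A(s)=\sum_k\beta(k)k^{-s}$ is a finite Euler product with $\sum_k|\beta(k)|k^{\delta}\le C$, the constants $C,\delta>0$ being independent of $z_1,z_2$ (the factor $A$ absorbing the $\{1+O(p^{-2})\}$ discrepancies and the squarefree correction). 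The coefficient of $m^{-s}$ in $Q(s)$ is $m^{-1}\#\{(n_1,n_2):n_1n_2=m,\,P^+(n_1)\le z_1,\,P^+(n_2)\le z_2\}$ ($P^+$ the largest prime factor), so extracting coefficients yields $V(z,h)^{-1}=A(0)Q(0)=A(0)\prod_{p<z_1}(1-p^{-1})^{-1}\prod_{p<z_2}(1-p^{-1})^{-1}$ and
\[
G(z)=\sum_k\beta(k)\,T\!\bigl(z/k;z_1,z_2\bigr),\qquad T(x;z_1,z_2):=\sum_{\substack{n_1n_2< x\\ P^+(n_1)\le z_1,\ P^+(n_2)\le z_2}}\frac1{n_1n_2}.
\]

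\textbf{Step 2: evaluation of $T(x;z_1,z_2)$.} This is the core of the argument. Summing first over $n_2$, the de Bruijn estimate $\Psi(t,y):=\#\{n\le t:P^+(n)\le y\}=t\,\rho(\log t/\log y)\{1+o(1)\}$ (uniform for $\log t\asymp\log y$, the only range needed) with partial summation gives $\sum_{n_2\le y,\,P^+(n_2)\le z_2}n_2^{-1}=\log z_2\int_0^{\log y/\log z_2}\rho(w)\,dw+O(1)$. Feeding this in with $y=x/n_1$ and performing a second partial summation over $n_1$ against $\Psi(\cdot,z_1)$, with the substitution $n_1=z_1^{w_1}$ (the non-principal part of $d\Psi(\cdot,z_1)$ being lower order, as one sees from $w\rho'(w)=-\rho(w-1)$), the constraint $n_1n_2<x$ becomes $w_1\log z_1+w_2\log z_2<\log x$ and one obtains
\[
T(x;z_1,z_2)=(\log z_1)(\log z_2)\int\int_{\{w_1/u_1+w_2/u_2\le1,\ w_1,w_2\ge0\}}\rho(w_1)\rho(w_2)\,dw_1\,dw_2\;\{1+o(1)\},\qquad u_i=\frac{\log x}{\log z_i}.
\]

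\textbf{Step 3: assembly.} Summing the last display over $k$ by dominated convergence --- using $T(x;z_1,z_2)\ll(\log z_1)(\log z_2)$, the fact that for each fixed $k$ we have $\log(z/k)/\log z_i\to s_i$ so $T(z/k;z_1,z_2)/\{(\log z_1)(\log z_2)\}\to\int\int_{\{w_1/s_1+w_2/s_2\le1\}}\rho(w_1)\rho(w_2)$, the bound $\sum_k|\beta(k)|\le C$, and the tail estimate $\sum_{k>z^{\eta}}|\beta(k)|=o(1)$ --- gives $G(z)=A(0)(\log z_1)(\log z_2)\bigl\{\int\int_{\{w_1/s_1+w_2/s_2\le1\}}\rho(w_1)\rho(w_2)\bigr\}\{1+o(1)\}$. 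Since $V(z,h)^{-1}=A(0)\prod_{p<z_1}(1-p^{-1})^{-1}\prod_{p<z_2}(1-p^{-1})^{-1}\sim A(0)e^{2\gamma}(\log z_1)(\log z_2)$ by Mertens' theorem, the common factor $A(0)(\log z_1)(\log z_2)$ cancels and
\[
G(z)V(z,h)\longrightarrow e^{-2\gamma}\int\int_{\{w_1/s_1+w_2/s_2\le1\}}\rho(w_1)\rho(w_2)\,dw_1\,dw_2=B(s_1,s_2)^{-1},
\]
which is the proposition.

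The step I expect to be the main obstacle is Step 2: proving the two-dimensional smooth-number asymptotic for $T(x;z_1,z_2)$ with a serviceable error term. One needs de Bruijn's estimate for $\Psi(t,y)$ uniformly over all $t\le x$ and $y\in\{z_1,z_2\}$ with $\log t\asymp\log y$, and a careful two-stage partial summation in which the boundary contributions --- and the range of $n_1$ close to $x$, where the inner sum over $n_2$ degenerates to $\log(x/n_1)+O(1)$ --- are kept under control. Once this lemma and the elementary bound $T\ll(\log z)^2$ are in hand, Steps 1 and 3 are routine bookkeeping with absolutely convergent Euler products.
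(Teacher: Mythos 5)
Your proposal is correct and follows essentially the same route as the paper. The factorisation $\sum_d \mu^2(d)g(d)d^{-s}=A(s)Q(s)$ is precisely the paper's convolution identity $n\mu^2(n)g(n)=(\chi_1*\chi_2*j)(n)$ read in Dirichlet-series form, shifted by one in the variable (so $\beta(k)=j(k)/k$ and $A(s)=\sum_r j(r)r^{-1-s}$); your two-stage partial summation against de Bruijn's estimate for $T(x;z_1,z_2)$ matches the paper's nested evaluation of $\sum_{m\le x}k(m)\Psi(x/m;z_1)$ and $\sum_{r\le y}j(r)\Psi(y/r;z_2)$, the only cosmetic difference being that you carry the $1/(n_1n_2)$ weight throughout and so avoid the paper's final passage from $\sum_{n\le x}n\mu^2(n)g(n)$ back to $G(z)$ by partial summation, while the closing dominated-convergence step over $k$ corresponds to the paper's absolutely convergent sum $C_0=\sum_r j(r)/r$ with its tail bound $\sum_{R<r\le 2R}|j(r)|\ll R^{1/2+\ep}$.
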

We delay the proof of this result until \S \ref{sec:multfunc}. 
Note that the level of distribution required will be $D=z^2$, and that
we have taken $s_i=(\log D)/(2\log z_i)$, rather than the more normal $s_i=(\log
D)/(\log z_i)$. It is easy to translate to the latter notation, but
the definition of $B(s_1,s_2)$ would look rather less natural. 

\subsection{A version of the Bombieri--Vinogradov Theorem}
In the previous sub-sections we introduced remainder terms which can be
bounded in our applications by using a suitable version of the
Bombieri--Vinogradov Theorem.  We
begin by stating a convenient result from the literature.

\begin{lem}\label{lem:BV}
For $z_1, z_2,...,z_r \geq 2$, define the set with multiplicities
\begin{equation}\label{eqn:Pz}
P(z_1,...,z_r) = \{p^{(r)} = p_1...p_r: p_1\geq z_1,...,p_r\geq z_r\}.
\end{equation}Let $\pi_r(x; q, a)$ be the number of $p^{(r)} \in
P(z_1,...,z_r)$ such that $p^{(r)} \equiv a\pmod{q}$ and $p^{(r)} \le x$.  
Further let $\pi_r(x; q)$ be the number of $p^{(r)} \in
P(z_1,...,z_r)$ such that $p^{(r)} \le x$ and $(p^{(r)}, q) = 1$.
Then for any $A>0$ there exists $B=B(A)>0$ such that 
\begin{equation}
\sum_{q<x^{1/2}(\log x)^{-B}} \max_{(a, q) = 1} \left| \pi_r(x; q, a)
  - \frac{1}{\phi(q)}\pi_r(x; q)\right| \ll x(\log x)^{-A}, 
\end{equation}
where the implied constant depends only on $r$ and $A$.  
\end{lem}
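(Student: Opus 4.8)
The plan is to adapt the classical proof of the Bombieri--Vinogradov theorem, the only new point being that the conditions $p_i\ge z_i$ and $p^{(r)}\le x$ are carried inside the coefficients of bilinear forms rather than removed at the start. First I would detect the residue class $a$ by Dirichlet characters: writing $g(n)$ for the number of ordered factorisations $n=p_1\cdots p_r$ with $p_i\ge z_i$ for every $i$, orthogonality gives, for $(a,q)=1$,
\[\pi_r(x;q,a)-\frac{1}{\phi(q)}\pi_r(x;q)=\frac{1}{\phi(q)}\sum_{\substack{\chi\bmod q\\ \chi\ne\chi_0}}\bar\chi(a)\,\Psi_r(x,\chi),\qquad \Psi_r(x,\chi):=\sum_{(n,q)=1}g(n)\chi(n),\]
so that $\max_{(a,q)=1}|\cdots|\le\phi(q)^{-1}\sum_{\chi\ne\chi_0}|\Psi_r(x,\chi)|$, and it suffices to prove
\[\sum_{q<Q}\frac{1}{\phi(q)}\sum_{\substack{\chi\bmod q\\ \chi\ne\chi_0}}|\Psi_r(x,\chi)|\ll x(\log x)^{-A},\qquad Q=x^{1/2}(\log x)^{-B},\]
with $B=B(A)$ chosen sufficiently large. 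Note that the coprimality condition $(n,q)=1$, like the conditions $p_i\ge z_i$ and $n\le x$, is not of lower order for $g$; it will be kept inside the coefficients below rather than removed.

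Next I would pass to the von Mangoldt function: for a prime $p$, $\mathbf 1[p\ge z_i]$ equals $\Lambda(p)(\log p)^{-1}\mathbf 1[p\ge z_i]$, the prime-power discrepancy being handled routinely. Applying a Vaughan- or Heath--Brown-type identity to the resulting $r$-fold convolution of copies of $\Lambda$, together with a dyadic decomposition of each variable (which also disposes of the $(\log)^{-1}$ weights at the cost of a factor $(\log x)^{O(1)}$), expresses $\Psi_r(x,\chi)$, up to $(\log x)^{O(1)}$ pieces, as a combination of \emph{Type I} sums $\sum_{m\le M}\alpha_m\sum_{\ell\le x/m}\chi(m\ell)$ with $M\le x^{1/2-\delta}$ and \emph{Type II} sums $\sum_{m\sim M}\sum_{\ell\sim L}\alpha_m\beta_\ell\,\chi(m\ell)$ with $x^{\delta}\le M\le x^{1/2-\delta}$ and $ML\asymp x$; throughout one has $\alpha_m,\beta_\ell\ll\tau(m\ell)^{O(1)}$, with the arithmetic restrictions living inside $\alpha,\beta$ without inflating these divisor bounds. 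The Type I sums are treated by the elementary bound $\#\{\ell\le y:\ell\equiv c\ (q)\}=y/q+O(1)$, whose total contribution is $\ll QM(\log x)^{O(1)}\ll x(\log x)^{-A}$ once $M\le x^{1/2-\delta}$ and $B$ is large. The Type II sums are treated by the multiplicative large sieve for Dirichlet characters, which in the stated ranges of $M$ and $L$ gives a bound $\ll x(\log x)^{-A}$ for $B=B(A)$ large. Summing over the $(\log x)^{O(1)}$ pieces and then over $q<Q$ finishes the proof, and keeping track of the dependencies shows the implied constant depends only on $r$ and $A$.

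The crux --- and the main obstacle --- is the Type II estimate and the resulting level of distribution $x^{1/2-o(1)}$; this is exactly the delicate ingredient already present in the classical theorem, and since the bilinear structure of an $r$-fold product of primes is identical, the same large-sieve argument goes through. What the product structure adds is only bookkeeping: arranging the Type I pieces so that the long variable $\ell$ runs over a genuine interval, disposing of the prime-power terms, and carrying the conditions $p_i\ge z_i$, $n\le x$, $(n,q)=1$ through the combinatorial identity without damage. All of this is standard, so that the lemma is a routine extension of the Bombieri--Vinogradov theorem and can equally be quoted from the literature, as is done here.
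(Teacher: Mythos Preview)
The paper does not prove this lemma at all: it simply quotes it as Theorem~22.3 of Friedlander and Iwaniec~\cite{FI}. Your sketch is a correct outline of the standard argument behind that theorem --- reduction to primitive characters, Siegel--Walfisz for small conductors, and the multiplicative large sieve applied to a Type~I/Type~II decomposition for large conductors --- and you rightly observe at the end that the result can equally be cited. The only refinement worth flagging is that for an $r$-fold product of primes one need not apply a combinatorial identity to every copy of~$\Lambda$: after dyadic localisation either some sub-product of the $p_i$ already lies in $[x^{\delta},x^{1-\delta}]$, yielding a Type~II sum directly, or else a single prime exceeds $x^{1-O(\delta)}$, in which case Vaughan's identity is applied to that factor alone and the remaining short primes are absorbed into the coefficients. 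Your description ``applying a Vaughan- or Heath--Brown-type identity to the $r$-fold convolution'' is consistent with this but slightly heavier than necessary.
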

This is Theorem 22.3 of Friedlander and Iwaniec \cite{FI}. 

Note that the result reduces to the classical version of the
Bombieri--Vinogradov Theorem when $r=1$.  In our applications we will
sometimes need to replace the set defined in \eqref{eqn:Pz} with sets
of the form 
\begin{equation}\label{eqn:PZgen}
P(z_1,...,z_r,y_1,...,y_r) = \{p^{(r)} = p_1...p_r: y_1\geq p_1\geq
z_1,...,y_r\geq p_r\geq z_r\} 
\end{equation}for $z_1,...z_r,y_1,...,y_r \geq 2$ where we allow $y_i
= \infty$ in which case the condition $y_i\ge p_i$ is automatically
fulfilled.  The lemma clearly holds for these sets as well
since we may express a set of the form \eqref{eqn:PZgen} in terms of
sets of the form \eqref{eqn:Pz}, using the inclusion-exclusion principle. 

We will actually need the following slightly different version of the
above lemma.
\begin{lem}\label{lem:BV2}
Let $P(z_1,...,z_r, y_1,...,y_r)$ be as in \eqref{eqn:PZgen} and fix
notation as in Lemma \ref{lem:BV}.  For each $q\geq 1$, let 
\begin{equation}
R_q(x) =  \max_{(a, q) = 1} \left| \pi_r(x; q, a) - \frac{1}{\phi(q)}
  \pi_r(x; q)\right|. 
\end{equation}
Then for any $A>0$ and $k\geq 1$ there exists $B = B(A,k)>0$ such that 
\begin{equation}\label{eqn:lemBV2}
\sum_{q<x^{1/2}(\log x)^{-B}} \tau(q)^k R_q(x) \ll x (\log x)^{-A},
\end{equation}
where the implied constant depends only on $r,k$ and $A$.  
\end{lem}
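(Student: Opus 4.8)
The plan is to deduce Lemma \ref{lem:BV2} from Lemma \ref{lem:BV} by a standard device for removing the divisor-function weight $\tau(q)^k$ at the cost of enlarging the family of sets under consideration and of replacing $A$ by a larger value. First I would record the trivial pointwise bound $R_q(x)\ll \pi_r(x;q)/\phi(q)\ll x(\log x)^{C}/\phi(q)$ for some $C=C(r)$, valid for every $q$, which follows from the definition of $\pi_r$ together with $\pi_r(x;q)\le\pi_r(x)\ll x(\log x)^{r-1}$; this will let me control the tail where the Bombieri--Vinogradov saving is not needed.

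The key step is a dyadic / combinatorial decomposition of $\tau(q)^k$. One clean way: use $\tau(q)^k\le\tau_{2^k}(q)$, the $2^k$-fold divisor function, so that $\tau(q)^k R_q(x)\le\sum_{q=m_1\cdots m_{2^k}} R_q(x)$; then by Cauchy--Schwarz applied to the factorisation, or more simply by bounding $\tau_{2^k}(q)\ll_\ep q^\ep$ on the bulk range and using the pointwise bound on a thin tail, one reduces to an estimate \emph{without} the weight but with $x^\ep$ lost in the modulus range. Concretely, split the sum at $q\le x^{1/2}(\log x)^{-B}$ into $q\le x^{1/2}(\log x)^{-B'}$ (handled by Lemma \ref{lem:BV} with exponent $A'=A+C+1$, say, absorbing $\tau(q)^k\ll(\log x)^{O_k(1)}$ for $q$ in a range where we can afford to be crude — note that on the \emph{whole} range $q<x^{1/2}$ one has $\tau(q)\le x^{o(1)}$ but not $(\log x)^{O(1)}$, so this crude step alone is not enough) and a remaining ``high-$\tau$'' portion. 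The honest route, which I expect the authors take, is: write $q=q_1 q_2$ where $q_1$ is the $(\log x)^{B_0}$-smooth part and $q_2$ the rough part; on the rough part $\tau(q_2)\ll_\ep x^\ep$ is too lossy, so instead one observes $\sum_{q<Q}\tau(q)^{2k}\ll Q(\log Q)^{O_k(1)}$ and applies Cauchy--Schwarz:
\[
\sum_{q<Q}\tau(q)^k R_q(x)\le\Big(\sum_{q<Q}\tau(q)^{2k}R_q(x)\Big)^{1/2}\Big(\sum_{q<Q}R_q(x)\Big)^{1/2},
\]
bounding the second factor by Lemma \ref{lem:BV} and the first by the trivial bound $R_q(x)\ll x(\log x)^{C}/\phi(q)$ together with $\sum_{q<Q}\tau(q)^{2k}/\phi(q)\ll(\log x)^{O_k(1)}$, giving overall $\ll x(\log x)^{O_k(1)}\cdot(x(\log x)^{-A'})^{1/2}$, which is $\ll x(\log x)^{-A}$ once $A'$ is chosen large enough in terms of $A$ and $k$, and $B=B(A')$ from Lemma \ref{lem:BV} is renamed $B(A,k)$.

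Assembling: choose $A'$ so that $\tfrac12 A'-O_k(1)\ge A$, set $B=B(A')$ from Lemma \ref{lem:BV}, and note that Lemma \ref{lem:BV} applies verbatim to the sets $P(z_1,\dots,z_r,y_1,\dots,y_r)$ of \eqref{eqn:PZgen} by the inclusion--exclusion reduction already remarked in the text. The main obstacle is purely bookkeeping: making sure the Cauchy--Schwarz split does not lose more than a power of $\log x$ in the ``trivial'' factor — i.e. verifying $\sum_{q<Q}\tau(q)^{2k}/\phi(q)\ll(\log Q)^{2^{2k}}$, which is a standard Shiu-type or Rankin-type estimate — and tracking how the admissible modulus bound $x^{1/2}(\log x)^{-B}$ degrades, which it does not, since Lemma \ref{lem:BV} is invoked with the \emph{same} shape of range and only a larger $A$. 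No genuinely new analytic input is needed; everything reduces to Lemma \ref{lem:BV} plus elementary mean-value bounds for divisor functions.
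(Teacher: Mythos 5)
Your argument is essentially identical to the paper's: the core step in both is the Cauchy--Schwarz split $\sum_{q<Q}\tau(q)^k R_q(x)\le\big(\sum_{q<Q}\tau(q)^{2k}R_q(x)\big)^{1/2}\big(\sum_{q<Q}R_q(x)\big)^{1/2}$, bounding the first factor via the trivial estimate $R_q(x)\ll x(\log x)^{O(1)}/\phi(q)$ together with $\sum_{q<Q}\tau(q)^{2k}/\phi(q)\ll(\log Q)^{O_k(1)}$, and the second via Lemma \ref{lem:BV} with $A'$ large in terms of $A$ and $k$. The preliminary digressions about $\tau_{2^k}$-decompositions and smooth/rough splittings are unnecessary detours, but the argument you ultimately commit to is exactly the one in the text.
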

\begin{proof}
For $q<x$ we have  
\begin{equation}
R_q(x) \ll \frac{x}{\phi(q)},
\end{equation}
so that
\begin{equation}\label{eqn:BVRq1}
\sum_{q<x} \tau(q)^{2k} R_q(x) \ll x(\log x)^{2^{2k}}.
\end{equation}
On the other hand, Lemma \ref{lem:BV} shows that for any $A'>0$ we
will have
\begin{equation}\label{eqn:BVRq2}
\sum_{q<Q} R_q(x) \ll x(\log x)^{-A'}
\end{equation}
for $Q = x^{1/2}(\log x)^{-B'(A')}$.
The result then follows from \eqref{eqn:BVRq1} and \eqref{eqn:BVRq2} by
applying the Cauchy Schwarz inequality to \eqref{eqn:lemBV2}, and choosing $A'$
sufficiently large in terms of $A$ and $k$.   
\end{proof}

\section{Proof of the Theorem}\label{Proof}

\subsection{Bounding $S(\A; \xi_1)$}
We now take
\[\xi_i=x^{\theta_i}\;\;\;(i=1,2)\;\;\;\mbox{and}\;\;\; y=x^{\theta},\]
where $\theta_1$, $\theta_2$ and $\theta$ are constants satisfying
\[0<\theta_2<\theta_1<\frac{1}{3}\;\;\;\mbox{and}\;\;\;\theta_2<\theta<1.\]
Thus $\theta=v^{-1}$ in the notation of \S 2.

We will apply the lower bound vector sieve to the set
\[\W=\{(p+2,p+6):\, x^{1/3}<p\le x-6\},\]
taking $\zv=(\xi_1,\xi_2)$ and $X=\pi(x)$. Since $p>6$ we see that we
cannot have $d_1\mid p+2$ and $d_2\mid p+6$ unless
$(d_1,d_2)=(d_1,2)=(d_2,6)=1$. We therefore set
\[h(\dv) = 
\begin{cases}
\frac{1}{\phi(d_1d_2)}, &\textup{if } (d_1, d_2) = (d_1, 2) = (d_2, 6) =1,\\
0, &\textup{otherwise},
\end{cases}\]
and 
\[\#\W_{\dv} = h(\dv) \pi(x) + R(\dv),\]
whence Lemma \ref{lem:BV2} gives
\begin{equation}\label{nl}
\sum_{\substack{\dv\\d_1d_2<x^{1/2-\ep}}} \tau(d_1d_2)^4|R(\dv)| \ll x(\log x)^{-A},
\end{equation}
for any positive constant $A$.

We now use the vector sieve lower bound from Proposition \ref{P1}.  
According to (\ref{nl}), the remainder sum can be bounded adequately
when $D=x^{1/2-2 \epsilon}$.  The Euler factors in $V(z_0,h^*)V_1V_2$
are
\[1=4\big(1-\tfrac{1}{2}\big)^2\quad\mbox{for}\quad p=2,\]
\[1-\tfrac{1}{2}=\tfrac{9}{8}\big(1-\tfrac{1}{3}\big)^2\quad
\mbox{for}\quad p=3,\]
\[1-\frac{2}{p-1}=\left(1-\frac{3p-1}{(p-1)^3}\right)
\left(1-\frac{1}{p}\right)^2\quad\mbox{for}\quad 5\le p<z_0,\]
\[\left(1-\frac{1}{p-1}\right)^2=\left(1-\frac{1}{(p-1)^2}\right)^2
\left(1-\frac{1}{p}\right)^2\quad\mbox{for}\quad z_0\le p<\xi_2,\]
and
\[\left(1-\frac{1}{p-1}\right)=\left(1-\frac{1}{(p-1)^2}\right)
\left(1-\frac{1}{p}\right)\quad\mbox{for}\quad \xi_2\le p<\xi_1,\]
whence
\[V(z_0,h^*)V_1V_2\sim CV(\xi_1)V(\xi_2)\]
with
\beql{eqn:C}
C=\frac{9}{2}\prod_{p>3}\left(1-\frac{3p-1}{(p-1)^3}\right)
\eeq
and
\[V(z)=\prod_{p<z}(1-p^{-1})\sim \frac{e^{-\gamma}}{\log z}.\]
Note that $C$ is the constant appearing in the Hardy-Littlewood conjectures for such prime tuples.  We therefore obtain the lower bound
\beql{eqn:mainterm}
S(\A; \xi_1) \geq (C+o(1)) \pi(x) V(\xi_1)V(\xi_2)
f\big((2\theta_1)^{-1},(2\theta_2)^{-1}\big).
\eeq

\subsection{The terms $S(\A_p; \xi_1)$}
We may apply the upper bound vector sieve with the same set $\W$ as
before, noting that $h(pd_1,d_2)=\phi(p)^{-1}h(\dv)$ when $d_1\mid
P(\xi_1)$, $d_2|P(\xi_2)$ and $p\ge \xi_1$.  This easily leads to the bound
\begin{eqnarray*}
S(\A_p; \xi_1)&\le& (C+o(1)) 
\frac{\pi(x)}{p-1} V(\xi_1)V(\xi_2)F(s_1(p),s_2(p))\\
&&\hspace{1cm}\mbox{}+
O_{\ep}\left(\sum_{d_1d_2<p^{-1}D^{1+\ep}}\tau(d_1d_2)^4|R(pd_1,d_2)|\right),
\end{eqnarray*}
with
\[s_i(p)=\frac{\log D/p}{\log \xi_i}\quad (i=1,2).\]
Hence for any $P \le P'$ we have
\begin{eqnarray*}
\sum_{P\le p\le P'}S(\A_p; \xi_1)&\le& (C+o(1))\pi(x)\left(\sum_{P\le
    p\le P'}\frac{F(s_1(p),s_2(p))}{p-1}\right)V(\xi_1)V(\xi_2)\\
&&\hspace{1cm}\mbox{}
+O_{\ep}\left(\sum_{pd_1d_2<D^{1+\ep}}\tau(d_1d_2)^4|R(pd_1,d_2)|\right).
\end{eqnarray*}
The remainder sum is negligible, by Lemma \ref{lem:BV2}, if $D =
x^{1/2-2\ep}$.  It follows that
\begin{equation}\label{ext}
\sum_{P\le p\le P'}S(\A_p; \xi_1)\le (C+o(1))\pi(x)V(\xi_1)V(\xi_2)
\int_P^{P'}\frac{F(\sigma_1(t),\sigma_2(t))}{t\log t}dt,
\end{equation}
where we now have
\beql{sigdef}
\sigma_i(t)=\frac{\log \sqrt{x}/t}{\log \xi_i}\quad (i=1,2).
\eeq

Alternatively we can use Selberg's sieve as in subsection \ref{ss:ss}.  
For a given prime $p$ we take $\W=\W^{(p)}$ to consist of the 
values $(q+2)(q+6)$ where $q$ runs over primes in the interval
$x^{1/3}<q\le x-6$ such that $p\mid q+2$. For each
prime $r$ we use the residue classes
\begin{equation}\label{l1}
\Omega(r) = 
\begin{cases}
\emptyset, & r=2,\\
\{-2\}, & r=3,\\
\{-2, -6\} &  5\leq r <\xi_2,\\
\{-2\}, &  \xi_2\le r<\xi_1\\
\emptyset, & r\ge \xi_1.
\end{cases}
\end{equation}
It is natural to take $X=\pi(x)/(p-1)$ and
\beql{l2}
h(r) = 
\begin{cases}
0, & r=2,\\
\frac{1}{2}, & r=3\\
\frac{2}{r-1}, & 5\le r< \xi_2,\\
\frac{1}{r-1}, & \xi_2\le r<\xi_1,\\
0, & r\ge \xi_1.
\end{cases}
\eeq
Let $z\ge \xi_1$ and write 
\[s_i=\frac{\log z}{\log \xi_i}\quad i=1,2\]
Then
\[V(z,h)\sim CV(\xi_1)V(\xi_2).\]
If we write $r_d^{(p)}$ for the corresponding remainder in
(\ref{rem}) we will have
\begin{eqnarray*}
S(\A_p; \xi_1)&=&S(\W;z)\\
&\le& \frac{\pi(x)}{(p-1)G(z)}+\sum_{d<z^2}3^{\omega(d)}|r_d^{(p)}|\\
&=& (C+o(1))\frac{\pi(x)}{p-1}B(s_1,s_2)V(\xi_1)V(\xi_2)+
\sum_{d<z^2}3^{\omega(d)}|r_d^{(p)}|.
\end{eqnarray*}
Moreover
\[|r_d^{(p)}|\le\tau(d)\max_{(a,pd)=1}\left|
\{\pi(x-6;pd,a)-\pi(x^{1/3};pd,a)\}-\frac{\pi(x-6)-\pi(x^{1/3})}{\phi(q)}
 \right|.\]
Lemma \ref{lem:BV2} then shows that if we choose
$z=(\sqrt{x}/P)^{1/2}(\log x)^{-C_0}$ with a suitably large constant $C_0$ then
\[\sum_{P\le p\le P'}S(\A_p; \xi_1)\le (C+o(1))\pi(x)\left(\sum_{P\le
    p\le P'}\frac{B(\tilde \sigma_1(p),\tilde \sigma_2(p))}{p-1}\right)V(\xi_1)V(\xi_2),\]
with 
$$\tilde \sigma_i(t) = \frac{\log \bfrac{\sqrt{x}}{t}^{1/2}}{\log \xi_i},$$ provided that $\xi_1\le z$.
We then deduce that
\[\sum_{P\le p\le P'}S(\A_p; \xi_1)\le (C+o(1))\pi(x)V(\xi_1)V(\xi_2)
\int_P^{P'}\frac{B(\tilde \sigma_1(t),\tilde \sigma_2(t))}{t\log t}dt.\]
Comparison with (\ref{ext}) now shows that
\begin{equation}\label{eqn:middleterms}
\sum_{\xi_1\le p\le x^{1/3}}S(\A_p,\xi_1)\le
(C+o(1))\pi(x)V(\xi_1)V(\xi_2)I(\theta_1, \theta_2),
\end{equation}
with
\beql {eqn:Idef}
I(\theta_1, \theta_2)=
\int_{\theta_1}^{1/3}\alpha^{-1}
\min\left\{F\big(\frac{1-2\alpha}{2\theta_1}\,,\,
\frac{1-2\alpha}{2\theta_2}\big)\,,\,B\big(\frac{1-2\alpha}{4\theta_1}\,,\,
\frac{1-2\alpha}{4\theta_2}\big)\right\}d\alpha.
\eeq

\subsection{Estimating $N_0$ via Chen's r\^ole-reversal trick}

The number $N_0$ is defined in terms of products $p_1p_2p_2\in\A$.
However we can change our point of view and write
\[N_0=\#\{p+2\in\B: p\mbox{ prime}\},\]
where 
\begin{eqnarray*}
\B&=& \{p_1p_2p_3:\,\xi_1\le p_1\le x^{1/3}<p_2<p_3,\,
x^{1/3}+2<p_1p_2p_3\le x-4,\\
&& \hspace{3cm}(p_1p_2p_3+4,P(\xi_2))=1\}.
\end{eqnarray*}
Thus instead of sieving numbers $p+2$ and $p+6$ we will sieve numbers
$p_1p_2p_3-2$ and $p_1p_2p_3+4$. This is Chen's reversal of r\^oles.
Following the approach of subsection \ref{ss:ss}, we let
\[\W=\{p_1p_2p_3:\,\xi_1\le p_1\le x^{1/3}<p_2<p_3,\mbox{ and }
p_1p_2p_3\le x-4\}\]
and for each prime $r$ we define the set $\Omega(r)$
by
\begin{equation}\label{l3}
\Omega(r) = 
\begin{cases}
\emptyset, & r=2,\\
\{2\}, & r=3,\\
\{2, -4\} &  5\leq r <\xi_2,\\
\{2\}, &  \xi_2\le r<z\\
\emptyset, & r\ge z.
\end{cases}
\end{equation}
It follows that
\[N_0\le S(\W;z)\]
for any $z$ between $\xi_2$ and $x^{1/4}$, say.

It is natural to take $X=\#\W$ and to choose the function  
$h(r)$ to be given by (\ref{l2}) as 
before, except that now $h(r)=1/(r-1)$ for $\xi_2\le r<z$. 
With this definition we will have
\[V(z,h)\sim CV(z)V(\xi_2)\sim CV(\xi_1)V(\xi_2)s_1^{-1},\]
where $s_1=(\log z)/(\log \xi_1)$.
Moreover if we define $r_d$ via (\ref{rem}) then we can use Lemma
\ref{lem:BV2} with $z=x^{1/4}(\log x)^{-B/2}$ to show that
\beql{rest1}
\sum_{d<z^2}3^{\omega(d)}|r_d|\ll x(\log x)^{-A}.
\eeq
In order to do this we replace $\W$ by the set
\[\W_0=\{p_1p_2p_3:\,\xi_1\le p_1\le x^{1/3}<p_2,p_3,
\mbox{ and }p_1p_2p_3\le x-4\},\]
to which Lemma \ref{lem:BV2} applies directly. We should also note that
\[\pi_r(x-4;q)=\#\W_0+O(x\xi_1^{-1})=\#\W_0+O(x(\log x)^{-A-1}),\]
on allowing for possible common factors of $q$ and $p_1p_2p_3$. The
error term here is certainly small enough for (\ref{rest1}).

We also need to estimate $\#\W$.  We find that 
\begin{align} \label{eqn:Best}
\#\W  \sim \sum_{\substack{\xi_1 \leq p_1 \leq x^{1/3} \\ x^{1/3} \leq 
    p_2 \leq (x/p_1)^{1/2}}}  \frac{x}{p_1p_2 \log \frac{x}{p_1p_2}}  
&\sim x \int_{\xi_1}^{x^{1/3}} \int_{x^{1/3}}^{(x/v)^{1/2}}
\frac{du dv}{(u\log u)(v\log v)\log \bfrac{x}{uv}} \notag \\  
&\sim \pi(x) L(\theta_1^{-1}) 
\end{align}where
\begin{equation} \label{eqn:Gdef}
L(s) = \int_{1/s}^{1/3} \int_{1/3}^{\frac{1-\beta}{2}} \frac{d\alpha
  d\beta}{\alpha \beta (1-\alpha - \beta)}, 
\end{equation} by the change of variables $u = x^\alpha$ and $v = x^\beta$.  
We therefore conclude that
\beql{eqn:SBz2}
N_0\le \big(C+o(1)\big)\pi(x)V(\xi_1)V(\xi_2)L\big(\theta_1^{-1}\big)
4\theta_1 B\big(1,(4\theta_2)^{-1}\big).
\eeq

It is possible as well to apply the vector sieve here, but the bound
\eqref{eqn:SBz2} is always superior for our application. 

\subsection{The weighted sieve terms}
We now turn our attention to 
\begin{equation} \label{eqn:T}
T := \sum_{\xi_2\le q\le y}w_q\left(\#\B_q^{(1)}+\frac12\#\B_q^{(2)}\right).
\end{equation}

We write
\beql {eqn:Tupperbd}
T \le \sum_{\xi_2\le q\le y}w_q\left(\#\V_q^{(1)}+\frac12\#\V_q^{(2)}\right),
\eeq
where
\begin{equation}
\V^{(1)} = \{n+4: n \in \A \textup{ and } (n, P(x^{1/4})) = 1 \},
\end{equation}and
\begin{equation}
\V^{(2)} = \{n+4: n \in \A \textup{ and } n = p_1p_2, \xi_1 \le p_1 \le x^{1/4} < p_2 \}.
\end{equation}Note that $\B^{(1)} \subset \V^{(1)}$ and every element of $B^{(2)}$ is in $\V^{(2)}$ with the exception of those $n+4$ with $n \in \A$, $n = p_1p_2$ with $p_1, p_2 > x^{1/4}$, and those are counted in $\V^{(1)}$.

$\\$
We begin by examining $\#\V_q^{(1)}$.  We will use the Selberg sieve
as in subsection \ref{ss:ss}.  To be precise, we take
\[\W=\{p\in(x^{1/3},x-6]:
\,q\mid p+6\}\]
and
\[\Omega(r) = \begin{cases}
\emptyset, & r=2\mbox{ or }q,\\
\{-2\}, & r=3,\\
\{-2, -6\} &  5\leq r <\xi_2,\\
\{-2\}, &  \xi_2\le r<z,\,r\neq q.
\end{cases}\]
For some $z \in [\xi_2, x^{1/4}]$, we choose
\[X=\frac{1}{q-1}\#\{p: x^{1/3} < p \le x-6  \}\]
and use $h$ given by 
\begin{equation} \label{eqn:hrdef}
h(r) = 
\begin{cases}
0, & r=2\mbox{ or }q,\\
\frac{1}{2}, & r=3\\
\frac{2}{r-1}, & 5\le r< \xi_2,\\
\frac{1}{r-1}, & \xi_2\le r<z,\,r\neq q,
\end{cases}
\end{equation}
whence
\[V(z,h)\sim CV(z)V(\xi_2).\]
In estimating the individual terms in
\[\sum_{P<q\le P'}\#\V_q^{(1)},\]
Lemma \ref{lem:BV2} will allow us to use
$z=x^{1/4}q^{-1/2}(\log x)^{-B/2}$, provided that $z\ge\xi_2$. 
We therefore conclude that
\[\sum_{P<q\le P'}\#\V_q^{(1)}\leq \big(C+o(1)\big)\pi(x)
\sum_{P<q\le  P'}\frac{1}{q-1}V(z)V(\xi_2)
B\left(1,\frac{\log z}{\log \xi_2}\right).\]
Since $\log z\sim\log(x^{1/4}q^{-1/2})$ we have
\[V(z)\sim V(\xi_1)\frac{\log \xi_1}{\log(x^{1/4}q^{-1/2})},\]
and we deduce that
\[\sum_{\xi_2\le q\le y}w_q\#\V_q^{(1)}\le \big(C+o(1)\big)\pi(x) 
V(\xi_1)V(\xi_2)\Sigma,\]
with
\[\Sigma=\sum_{\xi_2\le q\le y}\frac{w_q}{q-1}
\frac{\log \xi_1}{\log(x^{1/4}q^{-1/2})}
B\left(1,\frac{\log x^{1/4}q^{-1/2}}{\log \xi_2}\right).\]
In order to ensure that $z \ge \xi_2$, we impose the condition 
that $2\theta_2+\theta < 1/2$.  Bearing in mind the  
definition (\ref{wtd}) of the weights $w_q = 1 - \frac{\log q}{\log y}$ we apply the Prime Number Theorem to see that
\begin{align*}
\Sigma &\sim \int_{\xi_2}^{y} \frac{\log \xi_1}{\log (x^{1/4}q^{-1/2})}\left(1-\frac{\log t}{\log y}\right) B\left(1, \frac{\log (x^{1/4}t^{-1/2})}{\log \xi_2}\right) \frac{dt}{t\log t} \\
&\sim \int_{\theta_2}^{\theta}\frac{4\theta_1}{1-2\alpha}
\frac{\theta-\alpha}{\alpha\theta}
B\left(1,\frac{1-2\alpha}{4\theta_2}\right)d\alpha.
\end{align*}For notational convenience, let
\begin{equation}\label{eqn:Jdef}
J(\theta_1, \theta_2, \theta) := \int_{\theta_2}^{\theta}\frac{4\theta_1}{1-2\alpha}
\frac{\theta-\alpha}{\alpha\theta}
B\left(1,\frac{1-2\alpha}{4\theta_2}\right)d\alpha,
\end{equation}so that
\begin{equation}\label{eqn:Vq1}
\sum_{\xi_2\le q\le y}w_q\#\V_q^{(1)}\le \big(C+o(1)\big) J(\theta_1, \theta_2, \theta)\pi(x)
V(\xi_1)V(\xi_2),
\end{equation}if $2\theta_2+\theta <1/2$.

As in the previous section, here too we could have used the vector sieve upper
bound, but again the Selberg method is superior. 
$\\$

We now examine $\#\V_q^{(2)}$.  Again, we will use the Selberg sieve
as in subsection \ref{ss:ss}, but our approach 
to $\V_q^{(1)}$ and our approach to $\V_q^{(2)}$ differ.  
In our treatment of $\V_q^{(1)}$, we 
took $\W$ to be a set of primes $p$ and used the sieve to handle the
conditions that $(p+2, P(x^{1/4})) = 1$ and $(p+6, P(\xi_2)) = 1$.
Here, we will take $\W$ to be a set of numbers of the form $n=p_1p_2$
for $p_1$ and $p_2$ prime, and use the sieve to handle the condition
that $n-2$ is prime and $(n+4, P(\xi_2)) = 1$ 

To be precise, we take
\[\W=\{p_1p_2\in(x^{1/3}+2,x-4]:
\,q\mid p_1p_2+4,\, \xi_1\le p_1\le x^{1/4}<p_2\}\]
and
\[\Omega(r) = \begin{cases}
\emptyset, & r=2\mbox{ or }q,\\
\{2\}, & r=3,\\
\{2, -4\} &  5\leq r <\xi_2,\\
\{2\}, &  \xi_2\le r<z,\,r\neq q.
\end{cases}\]
We choose
\[X=\frac{1}{q-1}\#\{p_1p_2\in(x^{1/3}+2,x-4]:
\, \xi_1\le p_1\le x^{1/4}<p_2\}\]
and use $h$ given by \eqref{eqn:hrdef} as before.  Recall that
\[V(z,h)\sim CV(z)V(\xi_2).\]
We have
\begin{align*}
X 
&\sim \frac{1}{q-1} \sum_{\xi_1 \le p_1 \le x^{1/4}} \frac{x}{p_1 \log \bfrac {x}{p_1}} \\
&\sim \frac{1}{q-1} \int_{\xi_1}^{x^{1/4}} \frac{x}{t\log \bfrac x t} \frac{dt}{\log t}\\
&\sim \frac{1}{q-1} \pi(x) \int_{\theta_1}^{1/4} \frac{du}{u(1-u)}\\
&= \frac{1}{q-1} \pi(x) \left(\log \frac {1-\theta_1}{3\theta_1}\right).
\end{align*}

In estimating
\[\sum_{P<q\le P'}\#\V_q^{(2)},\]
Lemma \ref{lem:BV2} will again allow us to use
$z=x^{1/4}q^{-1/2}(\log x)^{-B/2}$, provided that $z\ge\xi_2$. 
We therefore conclude that
\[\sum_{P<q\le P'}\#\V_q^{(2)}\leq \big(C+o(1)\big)\pi(x) \log \bfrac{1-\theta_1}{3 \theta_1}
\sum_{P<q\le  P'}\frac{1}{q-1}V(z)V(\xi_2)
B\left(1,\frac{\log z}{\log \xi_2}\right).\]
Continuing as in the previous section, we have
\beql {eqn:Vq2}
\sum_{\xi_2\le q\le y}w_q\#\V_q^{(2)}\le \big(C+o(1)\big) \log \bfrac{1-\theta_1}{3 \theta_1} J(\theta_1, \theta_2, \theta) \pi(x)
V(\xi_1)V(\xi_2),
\eeq
where $J$ is as defined in \eqref{eqn:Jdef}.  As in the previous section, here too we could have used the vector sieve upper bound, but again the Selberg method is superior. 

\subsection{Summary}
Putting \eqref{eqn:mainterm}, \eqref{eqn:middleterms} and \eqref{eqn:SBz2} into 
\eqref{eqn:basicdecomp} and by \eqref{eqn:Vq1}, \eqref{eqn:Tupperbd}  
and \eqref{eqn:Vq2}, we have  
\begin{equation}
S_1 - \lambda \sum_{\xi_2 \le p \le y} w_p \left(\# \B_p^{(1)} + \frac 12 \# \B_p^{(2)}\right) \geq C\pi(x) V(\xi_1)V(\xi_2)  H(\theta_1, \theta_2, \theta, \lambda) (1+o(1)), 
\end{equation}where

\begin{align}
H(\theta_1, \theta_2, \theta, \lambda) &= f((2\theta_1)^{-1}, (2\theta_2)^{-1})  - \frac {1}{2} I\left(\theta_1, \theta_2\right) - 2 L(\theta_1^{-1}) \theta_1 B(1, (4\theta_2)^{-1}) \notag \\
& - \lambda \left(1 + \frac 12
  \log \frac {1-\theta_1}{3\theta_1} \right)J(\theta_1, \theta_2, \theta). 
\end{align}

We chose $\theta_1 = 1/11$, $\theta_2 = 1/410$, $\theta = 1/30$ and $\lambda = 0.0145$.  Recall that $f$ is defined as a supremum - using Matlab to conduct a rough search for the sup, we compute that
$$f((2\theta_1)^{-1}, (2\theta_2)^{-1}) \geq 0.9992523...
$$
When calculating $I(\theta_1, \theta_2)$, the
quantity $$\min\left\{F\big(\frac{1-2\alpha}{2\theta_1}\,,\, 
\frac{1-2\alpha}{2\theta_2}\big)\,,\,B\big(\frac{1-2\alpha}{4\theta_1}\,,\,
\frac{1-2\alpha}{4\theta_2}\big)\right\}$$ appears, arising from use of both the vector sieve and our version of Selberg's sieve.  For our values of $\theta_1$ and $\theta_2$, $F\big(\frac{1-2\alpha}{2\theta_1}\,,\,
\frac{1-2\alpha}{2\theta_2}\big)$ is smaller for small values of $\alpha$, while $B\big(\frac{1-2\alpha}{4\theta_1}\,,\,
\frac{1-2\alpha}{4\theta_2}\big)$ becomes a better choice at around
$\alpha = 0.26.$  Again, a Matlab computation with a rough optimization of the value of $F\big(\frac{1-2\alpha}{2\theta_1}\,,\,
\frac{1-2\alpha}{2\theta_2}\big)$, which was defined as an inf, gives that
$$I\left(\theta_1, \theta_2\right) \leq 1.5630111...
$$It is simple to calculate that
$$L(\theta_1^{-1}) = 0.5477550...
$$

Further, using that 
\begin{equation}
\int_0^\infty \rho(u) du = \int_0^\infty u \rho(u) du = e^\gamma,
\end{equation}we have that
\begin{align*}
B(1, v)^{-1} &= e^{-2\gamma}\int_0^\infty \rho(y) \int_0^{1-y/v} 1 dx dy - \int_v^\infty \rho(y) \int_0^{1-y/v} 1 dx dy.
\end{align*}On the other hand, using the crude upper bound $\rho(n) \le \frac{1}{n!}$, we have that
\begin{equation*}
\int_v^\infty \rho(y) \int_0^{1-y/v} 1 dx dy \le \sum_{n \geq \lfloor v\rfloor v} \frac{1}{n!} \leq \frac{e}{\lfloor v\rfloor!},
\end{equation*}while
\begin{align*}
e^{-2\gamma}\int_0^\infty \rho(y) \int_0^{1-y/v} 1 dx dy 
&= \left(1 - \frac 1 v\right) e^{-\gamma}.
\end{align*}

From this, it follows that
\begin{equation*}
B(1, (4\theta_2)^{-1}) = 1.7986199...
\end{equation*}

Substituting the above estimate for $B(1, v)$ into the definition of $J(\theta_1, \theta_2, \theta)$, we find that
$$
J(\theta_1, \theta_2, \theta)  = 1.1235270...
$$

We find that $H(\theta_1, \theta_2, \theta, \lambda) >0$ for $\lambda < 0.0214$.  From \eqref{eqn:weightedsecondcompbdd} this gives a bound
for $r$ of the form $r \le 1/\theta + 1/\lambda < 77$, giving the result that there are infinitely many primes $p$ such that $p+2$ has at most $2$ prime factors and $p+6$ has at most $76$ prime factors.


\section{The average of multiplicative functions 
appearing in Selberg's sieve} \label{sec:multfunc}
We end by proving Proposition \ref{prop:selmult}.  For $i \in \{1, 2\}$, let 
\[\chi_i(n) = 
\begin{cases}
1, &\textup{ if } p|n \Rightarrow p< z_i,\\
0, &\textup{ otherwise}.
\end{cases}\]
Recall that $g$ is the multiplicative function supported on squarefree numbers defined by
\[g(p) = \frac{h(p)}{1-h(p)}.
\]
We further define the multiplicative functions $k$ and $j$ by
\[n\mu^2(n)g(n) = (\chi_1*k)(n) = (\chi_1*\chi_2*j)(n),\]
for all natural numbers $n$, so that
\begin{eqnarray*}
\sum_{n\geq 1} \frac{n\mu^2(n)g(n)}{n^s} &=& \left(\sum_{n\geq 1}
  \frac{\chi_1(n)}{n^s}\right) \left(\sum_{n\geq 1}
  \frac{k(n)}{n^s}\right)\\
&  =&\left(\sum_{n\geq 1}
  \frac{\chi_1(n)}{n^s}\right)\left(\sum_{n\geq 1}
  \frac{\chi_2(n)}{n^s}\right) \left(\sum_{n\geq 1}
  \frac{j(n)}{n^s}\right).  
\end{eqnarray*}
The Dirichlet series above clearly converge for $\tRe s
> 1$.  Moreover we see that 
\[\sum_{r\geq 1}\frac{j(r)}{r^s} = \prod_{p} 
\left(1+ \frac{g(p)}{p^{s-1}}\right) \prod_{p<z_1} \left(1-\frac{1}{p^s}\right)
\prod_{p<z_2} \left(1-\frac{1}{p^s}\right). \]
Thus 
\[j(p)\ll p^{-1},\quad\quad\mbox{and}\quad\quad j(p^e)\ll 1\;\;(e\ge 2).\]
Similarly we find that
\[k(p)=\left\{\begin{array}{rr} 1+O(p^{-1}), & p<z_2,\\ O(p^{-1}), &
    z_2\le p<z_1,\\ 0, & p\ge z_1,\end{array}\right.\quad\quad
\mbox{and}\quad\quad k(p^e)\ll 1\;\;(e\ge 2).\]
These estimates suffice to show that
\beql{js}
\sum_{R< r\le 2R}|j(r)|\ll_{\ep} R^{1/2+\ep}
\eeq
for any fixed $\ep>0$, and
\beql{eqn:kjdirichlet}
\sum_{M<m\le 2M}|k(m)|\ll M.
\eeq

In order to study $G(z)$ we will first examine the average of
$d\mu^2(d)g(d)$,  whose behaviour resembles that of
$\chi_1*\chi_2$.  We intend to take advantage of the fact that
$\chi_1$ and $\chi_2$ are indicator functions of smooth numbers, and
the computation of their averages are standard results.  We have 
\begin{align}\label{eqn:nhsum}
\sum_{n\leq x} n\mu^2(n)g(n) = \sum_{m\leq x} k(m) \Psi(\frac xm; z_1),
\end{align}where $\Psi(x; y)$ is the number of $y$-smooth numbers
below $x$.  It follows from a result of de Bruijn \cite{dB} that 
\[\Psi(x; y) = x \rho\bfrac{\log x}{\log y}+O\left(\frac{x}{\log(2x)}\right)  
\]
uniformly for $1\le y\le x$, where $\rho$ is Dickman's function defined
as in the statement of
Proposition \ref{prop:selmult}.  Continuing from \eqref{eqn:nhsum}, we
have 
\begin{align}\label{eqn:nh2}
\sum_{n\leq x} n\mu^2(n)g(n) &= x \sum_{m\leq x} \frac{k(m)}{m}
\rho\bfrac{\log x/m}{\log z_1} + O\left(x \sum_{m\le x}
  \frac{|k(m)|}{m\log(2x/m)}\right) \notag \\ 
&= x \sum_{m\leq x} \frac{k(m)}{m} \rho\bfrac{\log x/m}{\log z_1} +
O(x\log\log x),
\end{align} upon observing that 
\[\sum_{m\le x}\frac{|k(m)|}{m\log(2x/m)} \ll \log\log x\]
by \eqref{eqn:kjdirichlet}.  
A similar calculation yields 
\begin{align}\label{ka}
\sum_{m\leq y} k(m) &= \sum_{r\leq y} j(r) \Psi\left(\frac yr; z_2\right) \notag \\
& = y \sum_{r\leq y} \frac{j(r)}{r} \rho \bfrac{\log y/r}{\log
z_2} + O\left(y\sum_{r\le y}\frac{|j(r)|}{r\log(2y/r)}\right) \notag \\ 
&= y\sum_{r\leq y} \frac{j(r)}{r} \rho\bfrac{\log y/r}{\log z_2} +
O(y(\log 2y)^{-1}),
\end{align} 
after noting that 
\[\sum_{r\le y}\frac{|j(r)|}{r\log(2y/r)}\ll (\log 2y)^{-1}\]
by \eqref{js}.  Another application of \eqref{js} shows that
\[\sum_{r\leq y} \frac{j(r)}{r} \rho \bfrac{\log y/r}{\log z_2}
=\sum_{r\leq \sqrt{y}} \frac{j(r)}{r} \rho \bfrac{\log y/r}{\log z_2}
+O(y^{-1/8}).\] 
Since $\rho'(t)\ll t^{-1}$ for $t>0$ we have
$\rho\bfrac{\log y/r}{\log z_2} 
 = \rho \bfrac{\log y}{\log z_2} + O\left(\frac{\log r}{\log
2y}\right)$ for $r\le\sqrt{y}$, whence two more applications of 
\eqref{js} yield
\begin{eqnarray*}
\sum_{r\leq \sqrt{y}} \frac{j(r)}{r} \rho \bfrac{\log y/r}{\log z_2}&=& 
\rho \bfrac{\log y}{\log z_2}\sum_{r\leq \sqrt{y}} \frac{j(r)}{r}+
O\left((\log 2y)^{-1}\sum_{r\leq \sqrt{y}} \frac{|j(r)|\log r}{r}\right)\\ 
&=&\rho \bfrac{\log y}{\log z_2}\sum_{r=1}^{\infty} \frac{j(r)}{r}+
O((\log 2y)^{-1}).
\end{eqnarray*}
It therefore follows from \eqref{ka} that
\[\sum_{m\leq y} k(m)=C_0 y\rho \bfrac{\log y}{\log z_2}
+O(y(\log 2y)^{-1}),\]
where
\beql{C0E}
C_0=\sum_{r=1}^{\infty} \frac{j(r)}{r}=V(z_1)V(z_2)V(z,h)^{-1}
\sim e^{-2\gamma}(\log z_1)^{-1}(\log z_2)^{-1}V(z,h)^{-1}.
\eeq
Note here that $C_0\ll 1$, since $C_0$ can be written as a product of
Euler factors each of which is $1+O(p^{-2})$.

We may now insert the above formula into \eqref{eqn:nh2}, using partial
summation to deduce that
\begin{eqnarray*}
\sum_{n\leq x} n\mu^2(n)g(n)&=&
x \sum_{m\leq x} \frac{k(m)}{m} \rho\bfrac{\log x/m}{\log z_1} +
O(x\log\log x)\\
&=&C_0 x\int_1^x\frac{1}{t}\frac{d}{dt}\left\{t\rho\left(\frac{\log
t}{\log z_2}\right)\right\}\rho\bfrac{\log x/t}{\log z_1} dt+O(x\log\log x).
\end{eqnarray*}
The integral is
\[\int_1^x\frac{1}{t}\rho\bfrac{\log t}{\log z_2}
\rho\bfrac{\log x/t}{\log z_1} dt
+\int_1^x\frac{1}{t\log z_2}\rho'\bfrac{\log t}{\log z_2}
\rho\bfrac{\log x/t}{\log z_1}dt.\]
However $\rho'(s)=0$ for $0<s\le 1$ and $\rho'(s)\ll s^{-1}$
otherwise.  Thus the second integral above is $O(\log\log x)$ so that
\[\sum_{n\leq x} n\mu^2(n)g(n)=C_0 x
\int_1^x
\rho\bfrac{\log t}{\log z_2}\rho\bfrac{\log x/t}{\log z_1}\frac{dt}{t}
+O(x\log\log x).\]
A further summation by parts now shows that
\begin{eqnarray*}
G(z)&=&\sum_{n<z}\mu^2(n)g(n)\\
&=&C_0\left\{\int_1^z\rho\bfrac{\log t}{\log z_2}
\rho\bfrac{\log z/t}{\log z_1}\frac{dt}{t}+\int_1^z\frac{1}{x}
\int_1^x\rho\bfrac{\log t}{\log z_2}\rho\bfrac{\log x/t}{\log z_1}\frac{dt}{t}
dx\right\}\\
&&\hspace{1cm}\mbox{}+O((\log z)(\log\log z)).
\end{eqnarray*}
The first integral above is $O(\log z)$, which may be absorbed into
the error term, while the second is
\[(\log z_1)(\log z_2)\int\int_{\{(w_1,w_2):w_1,w_2\ge 0,\,w_1/s_1+w_2/s_2\le 1\}}
\rho(w_1)\rho(w_2)dw_1dw_2.\]
The proposition now follows from (\ref{C0E}).


\end{document}